\def\namedlabel#1#2{\begingroup
 #2%
 \def\@currentlabel{#2}%
 \phantomsection\label{#1}\endgroup
}
\theoremstyle{plain}
\newtheorem*{theorem*}{Theorem}
\newtheorem*{thmex*}{Theorem~\ref{example}}
\newtheorem*{thmasymp*}{Theorem~\ref{thmAsymp}}
\newtheorem{theorem}{Theorem}[section]
\newtheorem{case}[theorem]{Case}
\newtheorem{claim}[theorem]{Claim}
\newtheorem{corollary}[theorem]{Corollary}
\newtheorem{lemma}[theorem]{Lemma}
\newtheorem{remark}[theorem]{Remark}
\theoremstyle{definition}
\newtheorem{definition}[theorem]{Definition}
\newcommand{\R}{\mathbb{R}}
\newcommand{\esf}{\mathbb{S}}
\renewcommand{\l}{\lambda}
\newcommand{\N}{{\mathbb{N}}}
\newcommand{\sn}[1]{{\mathbb{S}^{#1}}}
\newcommand{\hn}[1]{{\mathbb{H}^{#1}}}
\newcommand{\G}{\Gamma}
\newcommand{\ben}{\begin{enumerate}}
\newcommand{\een}{\end{enumerate}}
\newcommand{\wt}{\widetilde}
\newcommand{\g}{\gamma}
\renewcommand{\a}{\alpha}
\newcommand{\cB}{{\mathcal B}}
\newcommand{\cC}{{\mathcal C}}
\newcommand{\rth}{\R^3}
\newcommand{\cF}{{\mathcal F}}
\newcommand{\cG}{{\mathcal G}}
\newcommand{\cS}{{\mathcal S}}
\newcommand{\cT}{{\mathcal T}}
\newcommand{\ve}{\varepsilon}
\newcommand{\ed}{\end{document}}
\renewcommand{\S}{\Sigma}
\newcommand{\ol}{\overline}
\newcommand{\wh}{\widehat}
\definecolor{rrr}{rgb}{.9,0,.1}
\definecolor{rr}{rgb}{.8,0,.3}
\begin{document}

\title{Totally umbilic surfaces in hyperbolic 3-manifolds of finite volume}

\author{Colin~Adams
\and William H. Meeks III
\and Alvaro K. Ramos\thanks{The second and third
authors were partially supported
by CNPq - Brazil, grant no. 400966/2014-0.}.}

\maketitle

\begin{abstract}
We construct for every connected
surface $S$ of finite negative Euler characteristic and every $H \in [0,1)$,
a hyperbolic 3-manifold $N(S,H)$ of finite volume
and a proper, two-sided, totally umbilic
embedding $f\colon S\to N(S,H)$ with mean curvature $H$. Conversely, we prove
that a complete, totally umbilic surface with
mean curvature $H \in [0,1)$ embedded in a hyperbolic 3-manifold
of finite volume must be proper and have finite negative Euler characteristic.

\vspace{.15cm}
\noindent{\it Mathematics Subject Classification:} Primary 53A10, Secondary 49Q05, 53C42.

\vspace{.1cm}

\noindent{\it Key words and phrases:} Calabi-Yau problem,
hyperbolic $3$-manifolds of finite volume, totally umbilic surfaces.
\end{abstract}

\section{Introduction.}

In this manuscript we develop the theory of totally umbilic
surfaces in hyperbolic 3-manifolds
of finite volume; all spaces considered here are assumed
to be complete and connected and all surfaces in them
will be assumed to be complete, connected and embedded,
unless otherwise stated.

\begin{theorem}\label{proper1} Let $\S$ be a totally umbilic surface
with mean curvature $H_\S\geq0$
in a hyperbolic 3-manifold $N$ of finite volume.
Then:\ben
\item\label{itemprop} $\S$ is proper in $N$.
\item\label{itempos} $\S$ has positive Euler characteristic if and only if
$\S$ is a geodesic sphere. In particular,
$\S$ is not diffeomorphic to a plane or a projective plane.
\item\label{itemzero} $\S$ has zero Euler characteristic if and only if $N$ is non-compact
and $H_\S = 1$. In this case,
$\S$ is a flat torus or a flat Klein bottle that
is contained in some cusp end of $N$.
\item\label{itemneg} $\S$ has negative Euler characteristic if and only if
it has finite negative Euler characteristic if and only if $H_\S\in[0,1)$. Furthermore:
\ben
\item\label{itemnega} $\S$ has finite area
$A(\S) = \frac{2\pi}{H_\S^2-1}\chi(\S)$, where $\chi(\S)$ is the Euler characteristic of $\S$.
\item\label{itemnegb} If $H_\S>0$, then, for every $H\in (0,H_\S)$,
there is a
totally umbilic surface with mean curvature $H$ in the
ambient isotopy class of $\S$.
\item\label{itemnegc} If $H_\S>0$, then
there is a
totally geodesic surface $\S_0$ in the isotopy class of $\S$;
also, $\S$ is diffeomorphic to $\S_0$ if $\S_0$ is two-sided
and $\S$ is diffeomorphic to the two-sided
cover of $\S_0$ if $\S_0$ is one-sided.
\een
\een
\end{theorem}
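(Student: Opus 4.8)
\medskip\noindent\textbf{Proof strategy.}
The plan is to pass to the universal cover $\pi\colon\HH^3\to N$, writing $N=\HH^3/\Gamma$ with $\Gamma\cong\pi_1(N)$ discrete and torsion-free (torsion-free since $N$ is a manifold) and $\vol(\HH^3/\Gamma)<\infty$. As $\S$ is embedded and complete, $\wt\S:=\pi^{-1}(\S)$ is an embedded complete totally umbilic surface in $\HH^3$ with mean curvature $H_\S$, with pairwise disjoint connected components; by the classification of complete totally umbilic surfaces in $\HH^3$ each component $C$ is an \emph{entire} model surface: a totally geodesic plane if $H_\S=0$, an equidistant surface at distance $r_0=\mathrm{arctanh}\,H_\S$ from a totally geodesic plane $\Pi$ if $0<H_\S<1$, a horosphere if $H_\S=1$, and a geodesic sphere if $H_\S>1$. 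Each such $C$ is simply connected, so $C$ is the universal cover of $\S$ and, putting $\Gamma_C:=\{\gamma\in\Gamma:\gamma C=C\}$, one has $\Gamma_C\cong\pi_1(\S)$ and $\S=C/\Gamma_C$. The single fact that recurs and that I expect to be \emph{the main obstacle} is this: when $H_\S<1$, embeddedness of $\S$ forces $\mathrm{Stab}_\Gamma(\Pi)$ (whose side-preserving subgroup is $\Gamma_C$) to be a lattice in $\mathrm{Isom}(\HH^2)$, and $\S$ to be properly embedded. I would prove this either from the rigidity of geodesic planes in finite-volume hyperbolic $3$-manifolds — the closure of $\pi(\Pi)$ is a properly immersed finite-area totally geodesic surface or all of $N$, the latter contradicting $\pi(\Pi)$ being a locally closed submanifold, and in the former an infinite-area $\pi(\Pi)$ cannot sit inside a finite-area surface, so $\mathrm{Stab}_\Gamma(\Pi)$ has finite covolume — or else directly, by showing that non-properness or infinite area of $\S$ would produce a $\Gamma$-invariant lamination of $\HH^3$ by totally umbilic surfaces of mean curvature $H_\S$ accumulating on a leaf, which the maximum principle for constant mean curvature together with $\vol(N)<\infty$ forbids.

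For $H_\S>1$, $C$ is a compact geodesic sphere, so $\Gamma_C$ acts freely on $S^2$ and is trivial (being torsion-free); thus $\S$ is a geodesic sphere, compact hence proper, with $\chi(\S)=2$. For $H_\S=1$, $C$ is a horosphere centered at some $\xi\in\partial_\infty\HH^3$, and any isometry preserving $C$ fixes $\xi$ and all horospheres at $\xi$, so $\Gamma_C=\mathrm{Stab}_\Gamma(\xi)$; were this trivial the horoball bounded by $C$ would have pairwise disjoint $\Gamma$-translates (distinct horoballs with disjoint boundary horospheres are not nested) and would descend to an embedded horoball in $N$ — impossible, since a horoball has infinite volume. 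Therefore $\mathrm{Stab}_\Gamma(\xi)\ne1$, so (maximal parabolic subgroups in finite volume having rank $2$) $\Gamma_C$ is a rank-$2$ parabolic group, $\pi(C)$ is a properly embedded flat torus or flat Klein bottle bounding an embedded cusp neighborhood, $\chi(\S)=0$, and $N$ is non-compact. The converse implications in \ref{itempos} and \ref{itemzero} follow from a case analysis on $\pi_1(\S)$ when $\chi(\S)\ge0$: then $\pi_1(\S)$ is trivial, $\Z$, $\Z^2$, or a Klein-bottle group; a nontrivial parabolic subgroup of $\Gamma$ stabilizes only horospheres among the model surfaces, $\Z^2$-subgroups of $\Gamma$ are parabolic, $\mathrm{Stab}_\Gamma(\xi)$ is never infinite cyclic, and an infinite-cyclic $\Gamma_C$ with $C\cong\HH^2$ is excluded by the lattice fact above (as $\Z$ is no lattice) — so the only possibilities are exactly the geodesic sphere ($\chi=2$) and the cusp torus or Klein bottle ($\chi=0$). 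In particular $\S$ is never a plane or a projective plane, and item \ref{itemprop} holds in these cases.

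Next, for $H_\S\in[0,1)$: here $C\cong\HH^2$ carries the complete metric of constant curvature $-(1-H_\S^2)$, and via normal projection onto $\Pi$ (with $\Pi=C$ when $H_\S=0$), $\S$ is diffeomorphic to $\Pi/\Gamma_C$, hence to the embedded totally geodesic surface $\pi(\Pi)$ when $\S$ is two-sided — embeddedness of $\pi(\Pi)$ holding because the translates $\gamma C$ being pairwise disjoint or equal forces the same for the translates $\gamma\Pi$ (crossing geodesic planes yield crossing equidistant surfaces at every positive distance). By the lattice fact, $\S$ is properly embedded and of finite area, and, being a complete finite-area surface of constant negative curvature, has $\chi(\S)<0$; combined with the $H_\S\ge1$ analysis this gives the equivalences in \ref{itemneg}. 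Gauss–Bonnet then reads $(H_\S^2-1)\A(\S)=2\pi\chi(\S)$, i.e. $\A(\S)=\frac{2\pi}{H_\S^2-1}\chi(\S)$, proving \ref{itemnega}.

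Finally, assume $0<H_\S<1$. For $r\in[0,r_0]$ let $C_r$ be the equidistant surface at distance $r$ from $\Pi$ on the fixed side ($C_{r_0}=C$, $C_0=\Pi$). Since $\mathrm{Stab}_\Gamma(\Pi)$ preserves $\Pi$ it preserves each $C_r$, so $\S_r:=\pi(C_r)$ is totally umbilic in $N$ with mean curvature $\tanh r$; a short monotonicity argument (two equidistant surfaces over disjoint boundary circles, as the common distance to their planes grows, either stay disjoint or first meet at a threshold, never crossing and re-separating) shows $C_r$ embeds in $N$ for every $r\le r_0$ once $C_{r_0}$ does. The normal flow off $\Pi$ is an ambient isotopy taking $\S_0=\pi(\Pi)$ to $\S_{r_0}=\S$ through the $\S_r$, so setting $r=\mathrm{arctanh}\,H$ gives \ref{itemnegb} and identifies $\S_0=\pi(\Pi)$ as the totally geodesic surface of \ref{itemnegc}. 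Since $\Gamma_C$ has index $1$ or $2$ in $\mathrm{Stab}_\Gamma(\Pi)$ — index $1$ precisely when no element of $\Gamma$ fixing $\Pi$ reverses its sides, that is, precisely when $\S_0$ is two-sided — the $\Gamma_C$-equivariant projection $C\to\Pi$ identifies $\S$ with $\S_0$ when $\S_0$ is two-sided and with the two-sided double cover of $\S_0$ when $\S_0$ is one-sided, which completes \ref{itemnegc}.
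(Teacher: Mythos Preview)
Your argument is correct and follows essentially the same route as the paper: both reduce properness to the totally geodesic case via the normal projection $C\to\Pi$ (equivalently, the paper's $T$-parallel map $f_T$), both invoke the Shah/Ratner dichotomy for totally geodesic immersions, and both use the observation that crossing boundary circles at infinity force crossing equidistant surfaces (you phrase this as ``crossing geodesic planes yield crossing equidistant surfaces at every positive distance''; the paper lifts two transverse sheets of $\Sigma_T$ to $\HH^3$ and reaches the same contradiction). For items~\ref{itemnegb}--\ref{itemnegc} your analysis of $\mathrm{Stab}_\Gamma(\Pi)$ acting on the slab $W$ between $C$ and $\Pi$ is exactly the paper's case split on $\sigma(W)\cap W$.

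Two points of comparison are worth noting. First, you are more self-contained on items~\ref{itempos}--\ref{itemnega}: you obtain finite topology, $\chi(\Sigma)<0$, and the area formula directly from the fact that $\Sigma$ is a complete finite-area surface of constant curvature $H_\Sigma^2-1$ together with Gauss--Bonnet, whereas the paper outsources these to \cite{meramos1}. Second, your exclusion of the dense alternative in Shah's dichotomy is slightly looser than the paper's. You assert that $\pi(\Pi)$ is a ``locally closed submanifold''; but ``embedded'' here means only injectively immersed, and an injectively immersed surface can in principle be dense. The paper closes this by observing that the closure of an embedded totally geodesic surface is a $C^{\alpha}$ minimal lamination, whose tangent-plane field is then a continuous section over a set of dimension $\le 3$ inside the $5$-dimensional $\wh{T}(N)$, hence not dense. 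Your alternative lamination/maximum-principle sketch would also work once fleshed out, but as written the ``locally closed'' step is the one place where you should either invoke the lamination structure explicitly or argue (as you implicitly can) that the $\Gamma$-translates of $\Pi$ being pairwise disjoint forces their closure to be a totally geodesic lamination with continuous tangent field.
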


The next theorem characterizes the admissible topological types
of totally umbilic surfaces in hyperbolic 3-manifolds of finite volume
with mean curvature in $[0,1)$. It is a direct
consequence of Theorem~\ref{proper1} and
Theorem~\ref{thmcmc}.

\begin{theorem}[Admissible Topology Theorem]\label{admiss} \
A surface $S$ appears topologically as a
totally umbilic surface with mean curvature $
H\in [0,1)$ in some hyperbolic 3-manifold of finite
volume if and only if $S$ has finite negative Euler characteristic.
\end{theorem}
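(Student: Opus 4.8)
The plan is to deduce the equivalence directly from the two results already in hand, with no further geometric input of its own: Theorem~\ref{proper1} supplies the necessity direction, and Theorem~\ref{thmcmc} (the construction announced in the abstract) supplies the sufficiency direction.

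For necessity, suppose $S$ is realized topologically as a complete, connected, embedded totally umbilic surface $\Sigma$ with mean curvature $H_\Sigma = H \in [0,1)$ in some hyperbolic 3-manifold $N$ of finite volume. By item~\ref{itemneg} of Theorem~\ref{proper1}, the hypothesis $H_\Sigma < 1$ forces $\Sigma$ to have finite negative Euler characteristic; since $S$ is diffeomorphic to $\Sigma$, the same holds for $S$. It is worth noting that the real content of this direction is the ruling out of \emph{infinite} negative Euler characteristic, which is precisely what item~\ref{itemneg} asserts, while items~\ref{itempos} and~\ref{itemzero} confirm that no surface of nonnegative Euler characteristic can occur with $H<1$ (the sphere forces $H>1$ as a geodesic sphere, the torus and Klein bottle force $H=1$).

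For sufficiency, assume $S$ is a connected surface of finite negative Euler characteristic and fix $H \in [0,1)$. Then Theorem~\ref{thmcmc} produces a hyperbolic 3-manifold $N(S,H)$ of finite volume together with a proper, two-sided, totally umbilic embedding $f\colon S \to N(S,H)$ with mean curvature $H$; in particular $S$ appears topologically as a totally umbilic surface with mean curvature $H$ in a finite-volume hyperbolic 3-manifold, as required. Combining the two directions yields the stated equivalence.

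Since both halves are immediate consequences of the cited theorems, there is no new obstacle internal to this argument; the difficulty is entirely absorbed into the two inputs. I expect the construction encoded in Theorem~\ref{thmcmc} to be the harder ingredient, since it must exhibit, for every admissible topological type $S$ and every $H \in [0,1)$, a single ambient manifold that is simultaneously hyperbolic, of finite volume, and contains a totally umbilic embedded copy of $S$ with exactly the prescribed mean curvature; the finiteness statement in Theorem~\ref{proper1}\ref{itemneg} is the other, subtler, half. Both, however, are available here, so the proof of Theorem~\ref{admiss} reduces to the bookkeeping above.
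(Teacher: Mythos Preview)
Your proposal is correct and matches the paper's own argument exactly: the paper states that Theorem~\ref{admiss} is a direct consequence of Theorem~\ref{proper1} and Theorem~\ref{thmcmc}, and in Section~\ref{seccmc} explicitly reduces the proof of Theorem~\ref{admiss} to proving Theorem~\ref{thmcmc}. Your identification of item~\ref{itemneg} of Theorem~\ref{proper1} as carrying the necessity direction (including the exclusion of infinite negative Euler characteristic) is precisely how the paper intends the implication to run.
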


Our construction of a hyperbolic 3-manifold of finite volume with a given admissible, two-sided,
totally geodesic surface depends on
the Switch Move Theorem~\cite[Theorem~4.1]{amr2}
and the Switch Move Gluing Operation~\cite[Theorem~5.1]{amr2}
from our previous study of modifications of hyperbolic 3-manifolds that are link complements; see
Theorem~\ref{switch} for the statement of the Switch Move Theorem.
More specifically, for each admissible surface $S$, we apply these theorems
to construct a finite volume hyperbolic 3-manifold $N(S)$
with an order-2 isometry whose fixed point set is two-sided and contains
a component $\S$ diffeomorphic to $S$; see Theorem~\ref{thmtg}
for additional topological properties satisfied
by $N(S)$ and $\S$. In Section~\ref{seccmc},
we apply geometric arguments to prove that
for any $T>0$, there is a finite cover $N_T(S)$ of $N(S)$ together with a
lift $\wt{\S}$ of $\S$, so that one of the two $T$-parallel
surfaces to $\wt{\S}$ in $N_T(S)$ is a properly embedded, totally umbilic surface
diffeomorphic to $S$ with
mean curvature $\tanh(T)$;
crucial in these arguments is the property that the fundamental
group of a hyperbolic 3-manifold of finite volume is LERF (see Definition~\ref{defLERF}).

\section{The proof of Theorem~\ref{proper1}.}

In this section, we explain why a totally umbilic surface
in a hyperbolic 3-manifold of finite volume must be proper
and then show how Theorem~\ref{proper1} follows from this properness property.
In order to carry out these proofs, we need the
following definition, which will also be used in
Section~\ref{seccmc} to construct admissible totally
umbilic surfaces that are parallel to totally
geodesic ones.

\begin{definition}\label{deftparallel}
Let $N$ be a Riemannian 3-manifold and $f\colon S \to N$ be
a two-sided embedding with image $\S$ and
unitary normal
vector field $\eta$. We define, for
$t>0$, the {\em $t$-parallel surface
to $\S$} as the image $\S_t$ of the immersion
\begin{equation*}\label{diff}
f_t\colon S \to N,\quad
x \mapsto {\rm exp}(t\eta(f(x))).
\end{equation*}
Thus,
$$\S_t = \{{\rm exp}(t\eta(p))\mid p\in \S\}.$$
\end{definition}

\begin{proof}[Proof of Theorem~\ref{proper1}]
We first prove item~\ref{itemprop} of the theorem.
Since it is well-known that
totally umbilic surfaces with mean curvature $H\geq 1$
in a hyperbolic 3-manifold of finite volume $N$ are
either flat tori or Klein bottles of constant mean curvature $1$ in cusp ends of
$N$ or geodesic spheres, item~\ref{itemprop} holds for $H\geq1$.

Next, we consider the totally geodesic case. For a given manifold
$M$, let $\wh{T}(M)$ denote the bundle of unoriented tangent two-planes of $M$. Then, the
next result follows from the work of Shah~\cite{shah1} (also see
Ratner~\cite{ratn4} and Payne~\cite{pay1}). The phrase
\textquotedblleft immersed surface $f(S)$\textquotedblright \ in a 3-manifold $N$
is used to indicate that
the
image surface $f(S)$
of an immersion $f\colon S\to N$ may have points of self-intersection.

\begin{theorem}\label{thmshah}
Let $f\colon S\to N$ be a complete, totally geodesic
immersion of a surface $S$ to a hyperbolic
3-manifold $N$ of finite volume. Then, either
$f(S)$ is a properly immersed surface of finite area
or $f_*(\wh{T}(S))$ is dense in $\wh{T}(N)$.
\end{theorem}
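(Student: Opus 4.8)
This is essentially a theorem of Shah \cite{shah1}, and the argument I have in mind passes through Ratner's orbit closure theorem. Write $N=\G\backslash\hn 3$, where $\G<G:=\mathrm{PSL}(2,\C)=\mathrm{Isom}^+(\hn 3)$ is a lattice, and fix a basepoint $o\in\hn 3$ together with a standard totally geodesic plane $P_0\cong\hn 2$ through $o$. Let $K<G$ be the stabilizer of the pointed unoriented two-plane $(o,T_oP_0)$, so that $K\cong\mathrm{O}(2)$ and $\wh T(\hn 3)\cong G/K$, hence $\wh T(N)\cong\G\backslash G/K$; let $H<G$ be the setwise stabilizer of $P_0$, so that $K<H$, the identity component $H^0$ is isomorphic to $\mathrm{PSL}(2,\R)$ and acts on $P_0$ as its full orientation-preserving isometry group, and the set of totally geodesic two-planes tangent to $P_0$ at their footpoint is exactly the orbit $H\cdot(o,T_oP_0)$, i.e.\ the image of $H$ in $G/K$.

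The first step is to realize $f$ in this picture. Pulling back the universal cover $\hn 3\to N$ along $f$ and using that $f$ is totally geodesic with $S$ complete, the resulting lift is a complete totally geodesic immersion of a surface into $\hn 3$; its image is a single totally geodesic plane, which after conjugating $\G$ we may take to be $P_0$. Hence $f(S)$ is the projection of $P_0$ to $N$, and $f_*(\wh T(S))$ is precisely the image in $\wh T(N)=\G\backslash G/K$ of the orbit $\G H\subset\G\backslash G$.

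The heart of the proof is to apply Ratner's orbit closure theorem, in the form used by Shah \cite{shah1} (see also Ratner \cite{ratn4} and Payne \cite{pay1}), to the orbit $\G\backslash\G H$: its closure in $\G\backslash G$ equals $\G\backslash\G L$ for some closed subgroup $L$ with $H^0\subseteq L\subseteq G$ and $\G\cap L$ a lattice in $L$. Since $\mathfrak{sl}(2,\R)$ is a maximal proper real subalgebra of $\mathfrak{sl}(2,\C)$ regarded as a real Lie algebra, the Lie algebra of $L$ is either $\mathfrak{sl}(2,\R)$ or $\mathfrak{sl}(2,\C)$, so either $L^0=H^0$ or $L=G$. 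In the first case $\G\cap H^0$ is a lattice in $H^0\cong\mathrm{PSL}(2,\R)$, the orbit $\G\backslash\G H$ is closed, and it follows (the projection $\G\backslash G\to N$ has compact fibers) that $f(S)$ is a properly immersed surface whose area is that of the finite-volume quotient $(\G\cap H^0)\backslash P_0$. In the second case $\G\backslash\G H$ is dense in $\G\backslash G$; since the quotient map $\G\backslash G\to\G\backslash G/K=\wh T(N)$ is continuous, open and surjective, the image $f_*(\wh T(S))$ of $\G H$ is dense in $\wh T(N)$. The two alternatives are mutually exclusive, which gives the stated dichotomy.

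The step I expect to be the main obstacle is the translation itself rather than any hard estimate: one must check carefully that completeness of $f$ forces the lifted image to be a single plane $P_0$ (so that ``$f(S)$ properly immersed of finite area'' corresponds exactly to the orbit $\G H$ being closed, equivalently to $\G\cap H^0$ being a lattice), and that $f_*(\wh T(S))$ is genuinely the full image of the $H$-orbit under $\G\backslash G\to\wh T(N)$. The two external inputs — the maximality of $\mathfrak{sl}(2,\R)$ inside $\mathfrak{sl}(2,\C)$, and Ratner's orbit closure theorem — are standard, so once the dictionary is in place the result is immediate.
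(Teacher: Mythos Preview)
Your proposal is correct and follows the same route as the paper, which also attributes the theorem to Shah, Ratner, and Payne; the paper simply quotes \cite[Theorem~1.1]{pay1} for the dichotomy ``$f(S)$ proper or dense in $N$'' and \cite[Theorem~D]{shah1} for the upgrade to density in $\wh{T}(N)$, whereas you unpack the underlying homogeneous-dynamics argument directly (Ratner's orbit closure theorem together with the maximality of $\mathfrak{sl}(2,\R)$ in $\mathfrak{sl}(2,\C)$ as a real Lie algebra). One minor remark: by writing $\G<\mathrm{PSL}(2,\C)$ you tacitly assume $N$ is orientable, which the paper does not; the non-orientable case follows at once by passing to the orientable double cover.
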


In fact, Theorem~\ref{thmshah} can be seen to hold
by the following discussion. Let $f\colon S\to N$ be as stated and assume that
$S$ is endowed with a hyperbolic metric.
Then, both $S$ and $N$ are examples of locally symmetric spaces of rank one.
By the last statement of~\cite[Theorem~1.1]{pay1}, the closure of the image
$f(S)$ is a totally geodesic submanifold of $N$, which in the context of this theorem
means $f(S)$ is either proper
or it is dense in $N$. If $f(S)$ is not proper,
then, by~\cite[Theorem~D]{shah1}, $f_*(\wh{T}(S))$ is dense in $\wh{T}(N)$ and the
theorem holds.

Let $\S$ be a totally geodesic surface in a hyperbolic 3-manifold
$N$ of finite volume.
Observe that the closure of
$\S$ in $N$ is a minimal lamination of $N$ of class $C^{\a}$, for all $\a \in (0,1)$.
Therefore, $\wh{T}(\S)$ is not dense in $\wh{T}(N)$ and so
Theorem~\ref{thmshah} implies that $\S$ must have finite area and
be proper in $N$.

To finish the proof of item~\ref{itemprop},
let $\S$ be a totally umbilic
surface in $N$ with mean curvature $H\in (0,1)$.
Assume $\S$ is oriented with respect to a unit normal field $\eta$
pointing towards its mean convex side.
Let $T = \tanh^{-1}(H)>0$ and consider the $T$-parallel immersion
$f_T\colon \S\to N$ with immersed image surface $\S_T$.
Then, $\S_T$ is a complete totally geodesic immersed surface.

We claim that $\S_T$ is proper. Otherwise, $\S_T$ has
infinite area and Theorem~\ref{thmshah}
implies that $\wh{T}(\S_T)$
is dense in $\wh{T}(N)$. Since $\S_T$ has bounded norm of its
second fundamental form, $\S_T$ intersects itself transversely in a dense
set of points in $N$. Let $\Pi\colon \hn3 \to N$
denote the universal Riemannian covering map and
let $\wt{\S}_T^1,\,\wt{\S}_T^2\subset \hn3$
be two components of $\Pi^{-1}(\S_T)$
corresponding to two lifts of $\S_T$ that intersect transversely in $\hn3$.
Let $C_1,\,C_2$ be the respective boundary circles of $\wt{\S}_T^1$,
$\wt{\S}_T^2$, intersecting transversely in the
boundary sphere at infinity of $\hn3$.
Consider two respective lifts $\wt{\S}_1,\,\wt{\S}_2$ of $\S$ in $\hn3$
with the same circles $C_1,\,C_2$ at infinity.
Then, $\wt{\S}_1$ intersects $\wt{\S}_2$ transversely along a proper arc,
which implies that $\S$ is not embedded, a contradiction.
It also follows from this argument that $\S_T$ must be a proper totally
geodesic (embedded) surface in $N$.

To show that $\S$ is proper, there are two cases to consider.
First, assume that $\S_T$ is two-sided and
oriented with respect to the unitary normal field
$\wh{\eta}$ corresponding to the opposite orientation
from the induced immersion $f_T$.
Then, as $\S$ is connected, it is the image of the $T$-parallel
immersion $\wh{f}_T\colon \S_T\to N$,
which must be a proper map,
since the inclusion map
of $\S_T$ in $N$ is proper and the distance between
any two corresponding points $x\in \S_T,\,\wh{f}_T(x)\in \S$
is bounded by $T$.
On the other hand, if $\S_T$ is one-sided,
we may pass to the (proper) two-sheeted, two-sided cover
of $\S_T$ and repeat the same argument,
finishing the proof of item~\ref{itemprop} of
Theorem~\ref{proper1}.

As already observed, a surface $\S$ appears as a
totally umbilic surface
with mean curvature $H_\S\geq 1$ in a hyperbolic manifold $N$
of finite volume if and only if
$\S$ is a geodesic sphere with $H_\S>1$ or it is a flat torus or a flat
Klein bottle in a cusp end of $N$ when $H_\S=1$.
On the other hand, if $\S$ is a totally umbilic surface
with $H_\S\in[0,1)$, which must be proper by item~\ref{itemprop},
then
Corollary~4.7 of~\cite{meramos1} implies
that the Euler characteristic of $\S$ is negative, completing the proof of
items~\ref{itempos} and~\ref{itemzero}.

Note that the main statement of item~\ref{itemneg}
follows from the above discussion and the fact that
any properly immersed, infinite topology surface
with constant mean curvature $H\in[0,1)$
in a hyperbolic 3-manifold of finite volume
has unbounded norm of its second fundamental form;
see item~4 of~\cite[Theorem~1.3]{meramos1} for this unboundedness property.

The other statements of item~\ref{itemneg}
will be explained next. Suppose that $\S$ is a
totally umbilic surface in a hyperbolic
3-manifold $N$ of finite volume, with finite
negative Euler characteristic and $H_\S\in[0,1)$.
Then, item~\ref{itemnega} follows immediately
from~\cite[Corollary~4.7]{meramos1}.

Next, we prove items~\ref{itemnegb} and~\ref{itemnegc} by showing that for any
$t\in(0,T)$, the
$t$-parallel immersion $f_t\colon \S\to N$
is injective, where $T=\tanh^{-1}(H_\S)$.
Recall, from the proof of item~\ref{itemprop}, that
the image surface $\S_T = f_T(\S)$ is a
totally geodesic (embedded) surface in $N$.
Let $\wt{\S}_T\subset \hn3$ be a component of $\Pi^{-1}(\S_T)$
and let $\wt{\S}$ be a component of $\Pi^{-1}(\S)$
with the same boundary circle at
infinity as $\wt{\S}_T$. Let $W\subset \hn3$ be the closed region
with boundary $\wt{\S}\cup \wt{\S}_T$.
Since both $\S$ and $\S_T$ are embedded,
if $\sigma$ is a covering transformation
of $\Pi$, then either $\sigma$ maps $W$ to itself,
in which case $\sigma$ leaves invariant each
surface in $W$ parallel to $\wt{\S}$,
or $\sigma(W)\cap W = \wt{\S}_T$,
in which case $\sigma$ is a glide reflection along $\wt{\S}_T$ or a
loxodromic transformation of $\hn3$ with
respect to a geodesic $\g$ in $\wt{\S}_T$ that has
order-two rotational part about $\g$,
or $\sigma(W)\cap W = \varnothing$.
It follows that $f_t$ is injective for
all $t\in(0,T)$ and $f_T$ is injective
if and only if $\S_T$ is two-sided.
In the case where $\S_T$ is one-sided,
then the induced immersion $f_T\colon\S\to\S_T$ is a double covering of $\S_T$,
and items~\ref{itemnegb} and~\ref{itemnegc}
follow.
\end{proof}

\section{Results on hyperbolic link complements.}

\noindent
\begin{figure}
\noindent \quad
\quad\begin{minipage}{0.42\textwidth}
\begin{center}
\includegraphics[width=0.99\textwidth]{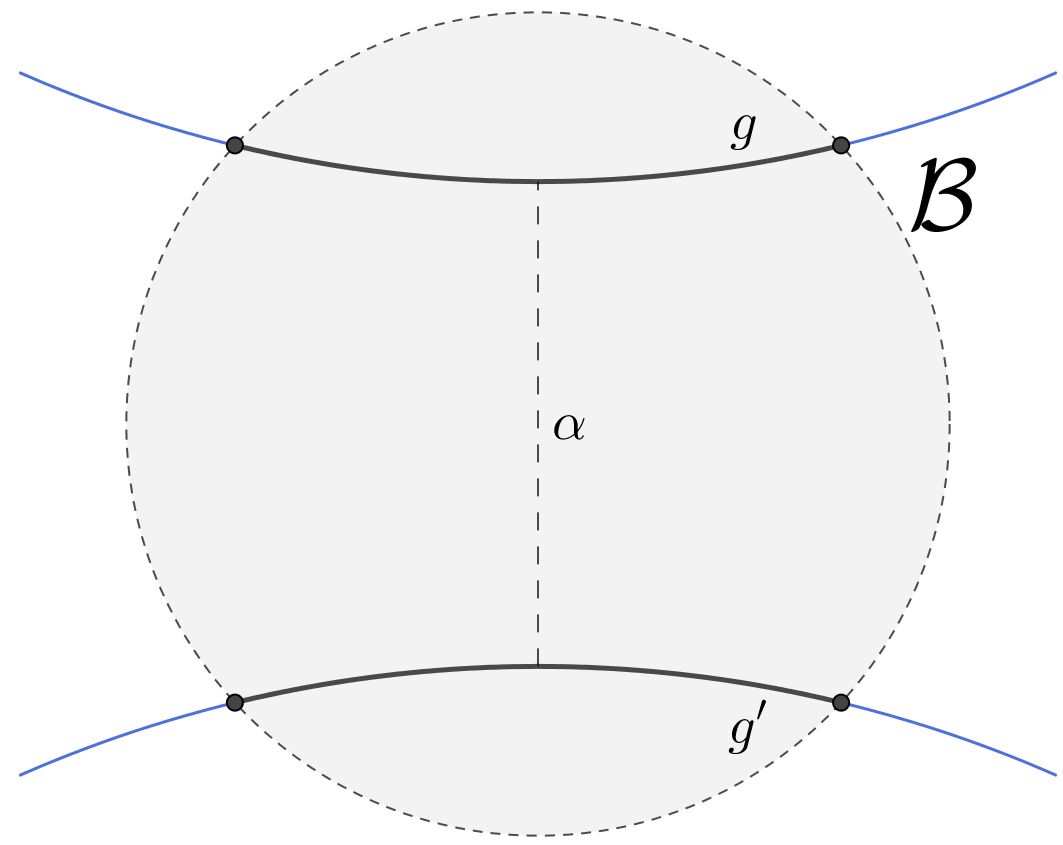}

(a)
\end{center}
\end{minipage}\hfill
$\longrightarrow$\hfill
\begin{minipage}{0.42\textwidth}
\begin{center}
\includegraphics[width=0.99\textwidth]{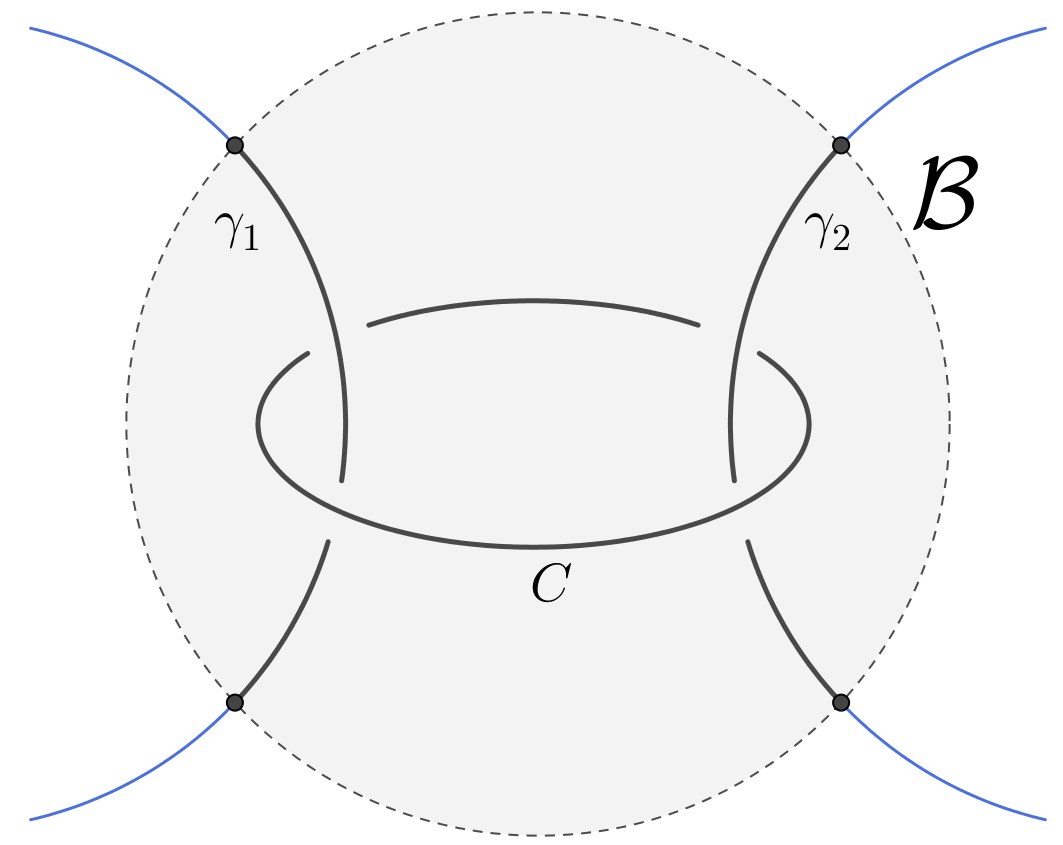}

(b)
\end{center}
\end{minipage}
\quad \quad
\caption{The Switch Move replaces the
arcs $g$ and $g'$ by the tangle $\g_1\cup \g_2\cup C$.
\label{figaugmented}}
\end{figure}

In this section, we present some results concerning the
hyperbolicity of link complements in 3-manifolds
that will be used to construct
the totally geodesic examples described in Theorem~\ref{thmtg}.

\begin{theorem}[{Switch Move Theorem~\cite[Theorem~4.1]{amr2}}]\label{switch}
Let $L$ be a link in a 3-manifold $M$
such that $M\setminus L$ admits a complete hyperbolic metric of
finite volume. Let $\a\subset M$ be a
compact arc which intersects $L$ transversely in its two distinct
endpoints, and such that int$(\a)$ is a properly
embedded geodesic of $M\setminus L$. Let
$\cB$ be a closed ball in $M$ containing $\a$ in its interior and
such that
$\cB\cap L$ is composed of two arcs in $L$, as
in Figure~\ref{figaugmented}(a).
Let $L'$ be the resulting link in $M$ obtained by replacing
$L \cap \cB$ by the components as appearing in
Figure~\ref{figaugmented}~(b).
Then $M\setminus L'$ admits a complete hyperbolic metric of
finite volume.
\end{theorem}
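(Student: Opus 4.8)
The plan is to derive the finite-volume hyperbolicity of $N':=M\setminus L'$ from that of $N:=M\setminus L$ by producing an essential thrice-punctured sphere inside $N'$, cutting $N'$ along it, and recognizing the resulting pieces as (essentially) copies of $N$. The finite-volume conclusion will then require no separate argument: $N'$ is the interior of a compact $3$-manifold with toroidal boundary, so as soon as a complete hyperbolic structure is shown to exist, all its ends are cusps and it has finite volume.

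\textbf{Step 1 (the decomposing surface).} Fix the complete hyperbolic metric on $N$, and note that the two ends of $\mathrm{int}(\alpha)$ run out the cusps of $N$ surrounding the strands $g$ and $g'$. In the switch tangle $\gamma_1\cup\gamma_2\cup C$ the crossing circle $C$ bounds a disk in $M$ meeting $L'$ transversely in exactly two points; deleting these two punctures and intersecting with $N'$ yields a properly embedded thrice-punctured sphere $P\subset N'$. The idea is to use $P$ as a decomposing surface: first show that $P$ is essential in $N'$ (incompressible, $\partial$-incompressible, not $\partial$-parallel), and then identify the pieces of $N'$ cut along $P$ — modulo a trivial collar $P\times[0,1]$ — either with $N$ itself or with a drilling of $N$ along a geodesic arising from $\mathrm{int}(\alpha)$.

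\textbf{Step 2 (essentiality of $P$ — the main obstacle).} This is the step that uses the hypothesis that $\mathrm{int}(\alpha)$ is a \emph{properly embedded geodesic} of $N$. A compressing or $\partial$-compressing disk for $P$ would, upon undoing the switch move, produce one of: an embedded $2$-sphere in $N$, an essential disk in $N$, or a homotopy of $\alpha$ rel endpoints pushing it into a ball of arbitrarily small diameter or across one of the strands $g,\,g'$. Each possibility contradicts either the irreducibility and atoroidality of $N$ or the fact that a geodesic arc minimizes length in its proper homotopy class in the complete metric on $N$. Carrying out this translation carefully is where the work lies: one must treat separately the case in which $g$ and $g'$ lie on one component of $L$ and the case of distinct components, and must track exactly how spheres, disks, and annuli in $N'$ cut off by $P$ meet the one-handle region $\gamma_1\cup\gamma_2\cup C$ that replaced $g\cup g'$. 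I expect this verification to be the crux of the proof.

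\textbf{Step 3 (assembling the hyperbolic metric).} Once $P$ is known to be essential, invoke two standard facts. First, an incompressible and $\partial$-incompressible thrice-punctured sphere in a hyperbolic $3$-manifold is isotopic to a totally geodesic one (Adams), so $P$ is geometrically standard and cutting along it is compatible with hyperbolic geometry. Second, drilling a geodesic out of a finite-volume hyperbolic $3$-manifold again gives a finite-volume hyperbolic $3$-manifold (Thurston's hyperbolic Dehn surgery theorem; Kojima). Combining these, the pieces of $N'$ cut along $P$ are finite-volume hyperbolic with totally geodesic thrice-punctured-sphere cusps, and $N'$ is reassembled from them by a gluing along $P$ (a ``belted sum''), an operation preserving finite-volume hyperbolicity. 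Equivalently, Steps 1 and 2 can be organized so as to verify directly that $N'$ is irreducible, atoroidal, boundary-incompressible, and not one of the small Seifert-fibered exceptions, and then Thurston's hyperbolization theorem for Haken $3$-manifolds applies and finishes the proof.
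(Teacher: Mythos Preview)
This paper does not contain a proof of the Switch Move Theorem: it is quoted verbatim from~\cite[Theorem~4.1]{amr2} and used as a black box (to prove Corollary~\ref{chain} and in Cases~\ref{case4}--\ref{case5} of Theorem~\ref{thmtg}). So there is no ``paper's own proof'' here to compare your proposal against.

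That said, your outline is a reasonable sketch of the standard strategy for results of this type, and is close in spirit to how such theorems are typically proved. A few remarks. First, your invocation in Step~3 of ``drilling a geodesic'' via Kojima/Thurston is stated for \emph{closed} geodesics, whereas $\mathrm{int}(\alpha)$ is a properly embedded \emph{arc} with both ends in cusps; the relevant drilling result is slightly different and needs to be formulated for arcs (or, more cleanly, one checks directly that $M\setminus L'$ is homeomorphic to $(M\setminus L)\setminus\mathrm{int}(\alpha)$ and then verifies Thurston's hyperbolization hypotheses for that manifold). Second, your Step~2 is, as you say, the crux, and your description of how a hypothetical compressing disk for $P$ translates back to $N$ is vague; in practice the argument is organized as a direct verification that $M\setminus L'$ is irreducible, atoroidal, and anannular, using that $N$ already has these properties and that $\alpha$ is geodesic (hence cannot be homotoped rel endpoints into a cusp or across a strand). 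If you want to compare with an actual proof, you should consult the cited reference~\cite{amr2}.
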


A consequence of the Switch Move Theorem
is the following Untwisted Chain Theorem.

\begin{figure}
\noindent \quad
\quad\begin{minipage}{0.42\textwidth}
\begin{center}
\includegraphics[width=0.99\textwidth]{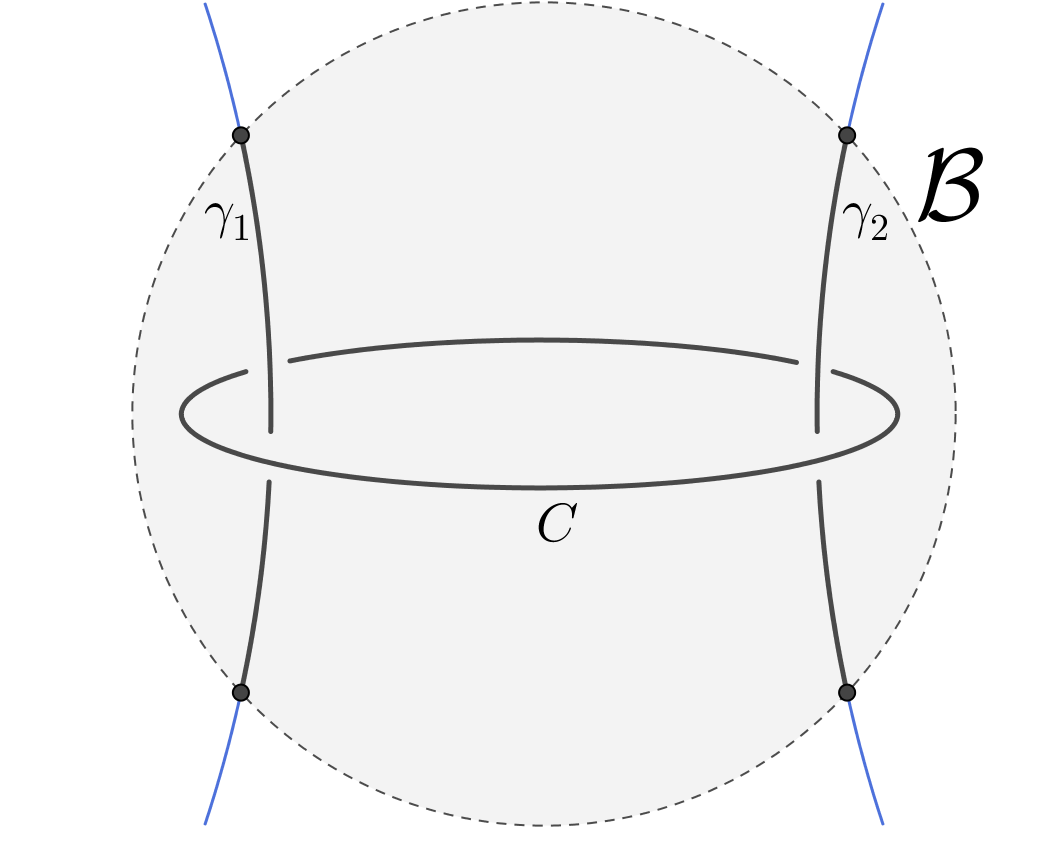}

(a)
\end{center}
\end{minipage}\hfill
$\longrightarrow$\hfill
\begin{minipage}{0.42\textwidth}
\begin{center}
\includegraphics[width=0.99\textwidth]{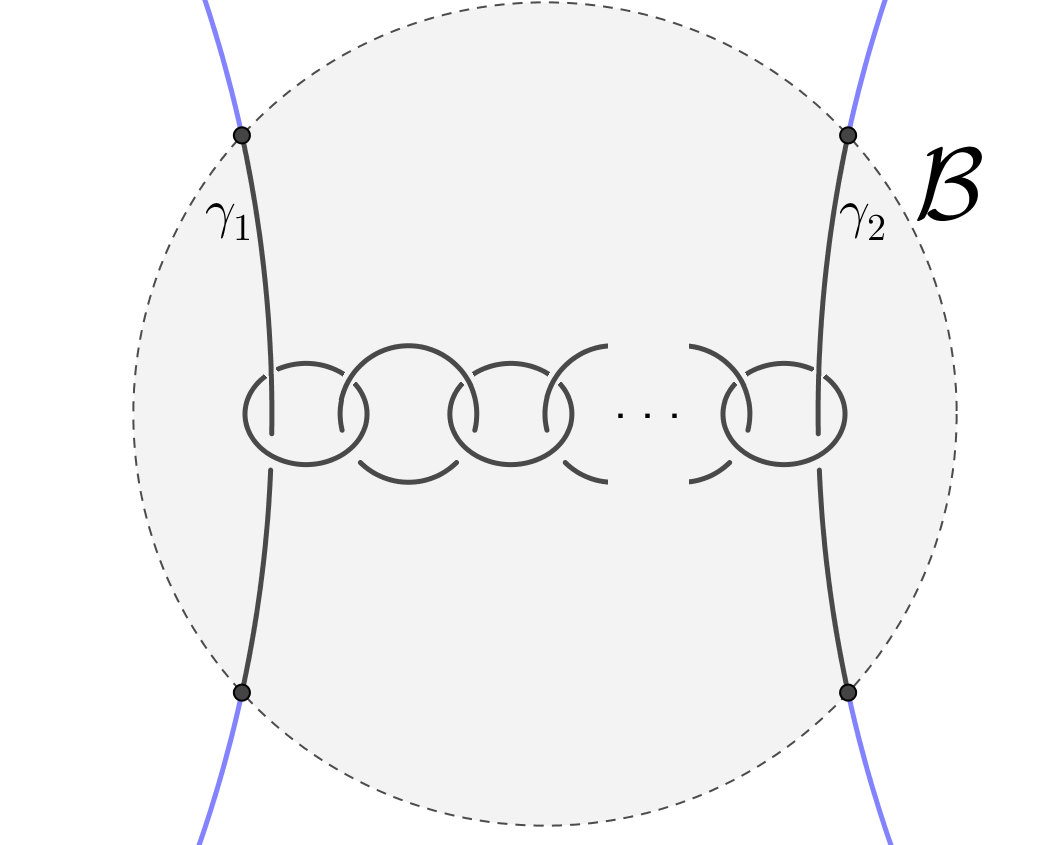}

(b)
\end{center}
\end{minipage}
\quad \quad
\caption{For any positive integer $k$, replacing the trivial component $C$ in
(a) with
the untwisted chain with $2k+1$ components as in (b)
preserves hyperbolicity of the complement.\label{chainlemma1}}
\end{figure}

\begin{corollary}[{Untwisted Chain Theorem}]\label{chain}
Let $L$ be a link in a 3-manifold $M$
such that the link complement $M\setminus L$ admits a
complete hyperbolic metric of finite volume.
Suppose that there is a sphere $\cS$ in $M$ bounding a ball $\cB$
that intersects $L$ as in Figure~\ref{chainlemma1}~(a).
For any positive integer $k$,
let $L'$ be the resulting link in $M$ obtained by replacing
$L \cap \cB$ by the untwisted chain with $2k+1$ components
as in Figure~\ref{chainlemma1}~(b).
Then $M\setminus L'$ admits a complete hyperbolic metric of
finite volume.
\end{corollary}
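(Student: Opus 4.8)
The plan is to realize the $(2k+1)$-component untwisted chain as the result of $k$ successive applications of the Switch Move Theorem to the original link $L$, each application inserting two new chain components. Concretely, for $j\geq 0$ let $L_j$ be the link in $M$ obtained from $L$ by replacing $L\cap\cB$ with the untwisted chain with $2j+1$ components, so that $L_0=L$ and $L_k=L'$. I would prove by induction on $j$ that $M\setminus L_j$ admits a complete hyperbolic metric of finite volume; the base case $j=0$ is the hypothesis of the corollary, and the whole content lies in the inductive step $j-1\to j$.

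For the inductive step, assume $M\setminus L_{j-1}$ is hyperbolic of finite volume. Fix one of the clasp regions of the untwisted chain inside $L_{j-1}$, say between two consecutive chain components $C_i$ and $C_{i+1}$ (for $j\geq 2$ chosen away from the ends of the chain; for $j=1$ this is the clasp formed by the two sub-arcs of $C$ running through $\cB$). There is an embedded compact arc $\a_0\subset\cB$ meeting $L_{j-1}$ transversely precisely in one interior point of $C_i$ and one of $C_{i+1}$, with $\mathrm{int}(\a_0)$ a short arc through the clasp, and a small ball $\cB'$ about $\a_0$ meeting $L_{j-1}$ in exactly the two-arc pattern of Figure~\ref{figaugmented}(a). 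The key pictorial verification is that replacing $L_{j-1}\cap\cB'$ by the tangle $\g_1\cup\g_2\cup C$ of Figure~\ref{figaugmented}(b) produces precisely the link $L_j$, i.e.\ inserts two more beads of the untwisted chain with the correct linking pattern; this is a direct comparison of the local models and I expect it to be routine once the figures are superimposed, up to a preliminary isotopy bringing the chain into standard clasp position near $\a_0$.

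To actually invoke Theorem~\ref{switch} we must replace $\a_0$ by an ambient-isotopic arc $\a$ whose interior is a \emph{properly embedded geodesic} of $M\setminus L_{j-1}$. Here $\mathrm{int}(\a_0)$ is a properly embedded arc joining the two cusps of $M\setminus L_{j-1}$ corresponding to $C_i$ and $C_{i+1}$, and it determines an isotopy class of such cusp-to-cusp arcs. In a finite-volume hyperbolic $3$-manifold this class has a geodesic representative meeting the cusp horospheres orthogonally (the length functional is proper on the class), and since $\a_0$ is unknotted and unlinked from the remainder of $L_{j-1}$ within $\cB'$, one can isotope $L_{j-1}$ so that this geodesic representative is embedded, disjoint from $L_{j-1}$ except at its two endpoints, and so that a ball about it still meets $L_{j-1}$ in the two-arc pattern. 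Taking this representative as $\a$, the Switch Move Theorem applies and gives that $M\setminus L_j$ is hyperbolic of finite volume, completing the induction.

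I expect the delicate point to be this last straightening step: producing an \emph{embedded} geodesic arc in the prescribed isotopy class and arranging the link so that the hypotheses of Theorem~\ref{switch} hold literally rather than merely up to homotopy. The remaining ingredients — setting up the induction, counting components, and matching the local tangle pictures — should be straightforward bookkeeping with the chain diagrams.
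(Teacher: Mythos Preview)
Your inductive framework---reduce to a single Switch Move that adds two chain components, then iterate---is exactly the paper's strategy. The difference is in how the geodesic arc required by Theorem~\ref{switch} is produced, and here the paper has a clean idea you are missing.

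The paper does not try to straighten an arbitrary cusp-to-cusp arc. Instead it observes that the circle component $C$ bounds a twice-punctured disk $D$ in $\cB$ (the punctures coming from the two strands through $C$), and that $D$ is an incompressible three-punctured sphere in $M\setminus L$. By Adams' theorem~\cite[Theorem~3.1]{ad1}, $D$ is isotopic to a \emph{totally geodesic} three-punctured sphere. On this totally geodesic surface one takes the simple proper geodesic arc $\a$ that separates the two punctures; its endpoints both lie on $C$, and because the surface is totally geodesic, $\mathrm{int}(\a)$ is automatically a properly embedded geodesic of $M\setminus L$. The Switch Move then applies with no further work, and the resulting picture again contains a subball of the same type (Figure~\ref{figchainin}), so the induction runs.

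By contrast, your arc $\a_0$ runs through a clasp with endpoints on two different chain components, and you appeal to a generic ``straighten the arc in its isotopy class'' argument. This is the gap you yourself flag, and it is real: the geodesic representative of a proper homotopy class of cusp-to-cusp arcs in a finite-volume hyperbolic $3$-manifold need not be embedded, and ``isotoping $L_{j-1}$'' does not help, since by Mostow rigidity the hyperbolic metric on the complement is fixed and the geodesic is what it is. You would also need to know that the geodesic stays in the correct \emph{isotopy} class (not just homotopy class) relative to $L_{j-1}$ for the Switch Move to produce the intended chain. The three-punctured-sphere trick bypasses all of this: embeddedness, geodesicity, and the correct local picture come for free from the totally geodesic surface.
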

\begin{proof}
We first prove Corollary~\ref{chain} when $k=1$.
Let $L$, $\cB$ and $M$ be as stated
and let $D\subset \cB$ be a two-punctured disk in $\cB\setminus L$
with $\partial D = C$, where $C$ is the circle component of $\cB\cap L$.
Since $D$ is an incompressible three-punctured sphere in the
hyperbolic 3-manifold $M\setminus L$, then~\cite[Theorem~3.1]{ad1}
gives that, up to isotopy, $D$ is totally geodesic.
Hence, there exists a compact arc $\a\subset \ol{D}$
in the closure of $D$,
such that its interior $\wt{\a}$ is a proper geodesic
in the hyperbolic metric of $M\setminus L$.
Moreover, $\a$ is transverse to $L$ with the endpoints
of $\alpha$ contained on $C$ and
$\a$ separates the punctures of $D$.
After applying the Switch Move Theorem in a neighborhood
of $\alpha$, we obtain a link $L'$ as in Figure~\ref{figchainin},
such that $M\setminus L'$
admits a complete, hyperbolic metric of finite volume.
The general case follows by induction on $k$, as indicated in Figure~\ref{figchainin}.
\begin{figure}
\centering
\includegraphics[width=0.42\textwidth]{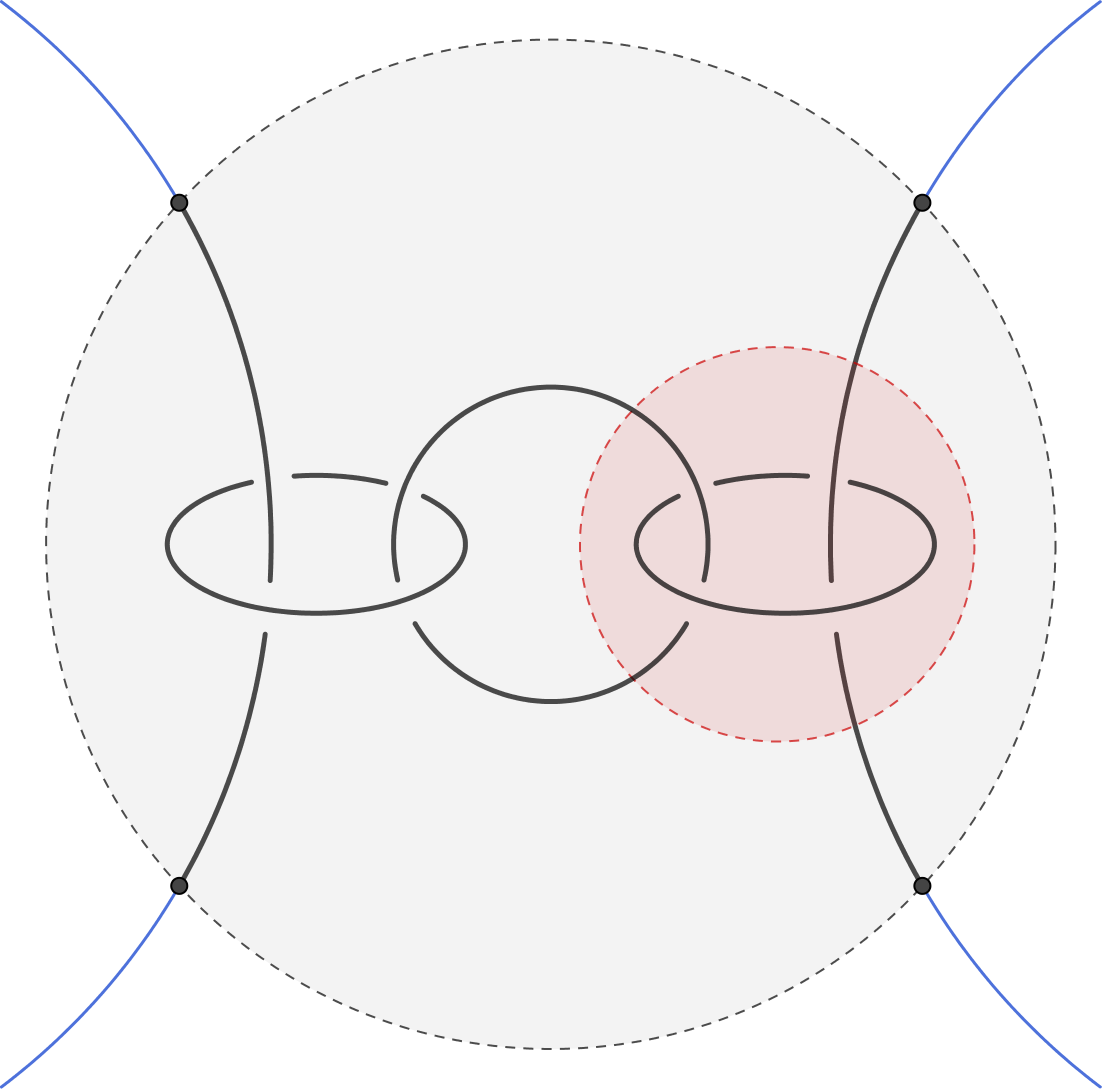}
\caption{The Untwisted Chain Theorem with $k=1$
can be repeated in the highlighted subball to obtain
any odd number of components.\label{figchainin}}
\end{figure}
\end{proof}

\begin{theorem}[Theorem 1.1 of~\cite{Aetal}]
\label{thm:main}
Let ${S}$ be a closed (possibly non-orientable)
surface with $\chi({S})\leq 0$. Then, there exists a link
$L$ in ${S} \times (0,1)$ such that:
\begin{enumerate}[a.]
\item\label{item:a} If $\chi({S})<0$, $(S \times [0,1])\setminus L$
admits a complete hyperbolic metric of finite volume with totally geodesic
boundary.
\item\label{item:b} If $\chi({S})=0$,
$(S \times (0,1))\setminus L$
admits a complete hyperbolic metric of finite volume.
\end{enumerate}
\end{theorem}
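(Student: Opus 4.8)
The plan is to realize $L$ from a pants decomposition of $S$ and to build the hyperbolic metric by an induction in which each step is an application of the Switch Move Theorem~\ref{switch} or the Untwisted Chain Theorem~\ref{chain}; these two results supply, for free, the preservation of hyperbolicity and finite volume at every stage, so the work lies in choosing the moves so that the ambient manifold stays $S\times(0,1)$ and, in part~(a), so that its boundary stays totally geodesic.

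I would begin with two reductions. If $S$ is non-orientable, let $\widehat S\to S$ be the orientation double cover with free deck involution $\iota$; then $\iota\times\mathrm{id}$ is a free involution of $\widehat S\times(0,1)$, so an $\iota$-equivariant link $\widehat L\subset\widehat S\times(0,1)$ with hyperbolic complement descends to a link $L\subset S\times(0,1)$ with $(S\times(0,1))\setminus L$ complete hyperbolic of finite volume, and with totally geodesic boundary $S\times\{0,1\}$ in the case $\chi(S)<0$. This reduces everything to orientable $S$. For $\chi(S)=0$, so $S$ a torus, part~(b) is then settled by exhibiting a single explicit finite-volume hyperbolic manifold homeomorphic to $(T^2\times(0,1))\setminus L$, for instance a suitable augmented chain-link complement whose two torus ends are cusps.

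For orientable $S$ with $\chi(S)<0$, fix a pants decomposition $S=P_1\cup\cdots\cup P_n$ along disjoint essential simple closed curves $c_1,\dots,c_m$. As a first approximation take $L_0=\bigcup_i(c_i\times\{1/2\})$, so that the vertical annuli $c_i\times[0,1]$ cut $(S\times[0,1])\setminus L_0$ into blocks $(P_j\times[0,1])$ with three curves drilled; each such block is a genus-two handlebody minus a link and is in general neither atoroidal nor anannular. The repair is to replace every $c_i\times\{1/2\}$ by an untwisted chain of $2k_i+1$ components via Theorem~\ref{chain}, the chains destroying the essential annuli supported on the $c_i\times[0,1]$. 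I would carry this out starting from one explicit hyperbolic seed, the complement of a small augmented or chain link homeomorphic to the simplest block $(P\times[0,1])\setminus(\text{one chain})$ with pants faces $P\times\{0\}$ and $P\times\{1\}$ totally geodesic, and then apply Theorems~\ref{switch} and~\ref{chain} repeatedly to grow it into the desired blocks. Along the way the frontier surfaces created by the chain moves are incompressible three-punctured spheres, hence by \cite[Theorem~3.1]{ad1} isotopic to totally geodesic ones; since a three-punctured sphere carries a unique complete hyperbolic metric, adjacent blocks glue along these surfaces by the usual combination argument for hyperbolic $3$-manifolds with geodesic boundary, and one obtains a complete finite-volume hyperbolic metric on $(S\times[0,1])\setminus L$ with totally geodesic boundary $S\times\{0,1\}$.

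The main obstacle is exactly this orchestration: choosing the seed and the sequence of moves so that (i) each essential annulus coming from a $c_i\times[0,1]$ is killed and no essential torus or sphere is created, (ii) after each move the ambient manifold is still recognized as $S\times(0,1)$ and the link complements are re-identified correctly, and (iii) in part~(a) the two copies of $S$ remain totally geodesic through every Switch Move and chain insertion. Items~(i) and~(ii) are handled structurally by Theorems~\ref{switch} and~\ref{chain} together with the rigidity of three-punctured spheres, so the genuinely delicate point is~(iii): performing every elementary modification away from, and compatibly with, the geodesic boundary, so that the final glued metric restricts on each copy of $S$ to an honest hyperbolic surface metric. An alternative route avoiding the induction is to take $L$ to be a ``fully augmented'' link whose complement decomposes into ideal right-angled polyhedra and to hyperbolize those directly, which bypasses (i) and (ii) at the cost of a circle-packing or Andreev-type existence statement.
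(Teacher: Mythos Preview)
The paper does not prove this statement: it is quoted verbatim as Theorem~1.1 of~\cite{Aetal}, and the only indication of the argument is the subsequent Remark, which says that $L$ may be taken to be any \emph{fully alternating} link in $S\times(0,1)$, with the hyperbolicity and geodesic boundary proved in~\cite{Aetal} (and, in the orientable case without the geodesic-boundary conclusion, also in~\cite{hp}). So there is no proof in the paper to compare to in detail; the approach recorded is the alternating-link one, not a pants-decomposition induction.

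Your sketch is a genuinely different route, and as written it has a real gap. The Switch Move Theorem and the Untwisted Chain Theorem both take as input a link complement that is \emph{already} complete hyperbolic of finite volume; they do not manufacture hyperbolicity from a non-hyperbolic starting point. Your initial link $L_0=\bigcup_i c_i\times\{1/2\}$ has a complement that is, as you note, neither atoroidal nor anannular, so Theorems~\ref{switch} and~\ref{chain} cannot be applied to it. You try to bypass this by positing an ``explicit hyperbolic seed'' homeomorphic to $(P\times[0,1])\setminus(\text{chain})$ with both copies of $P$ totally geodesic, but you never construct it; that seed is essentially the $n=1$ base case of the theorem you are trying to prove, and its existence is the crux. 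Moreover, even granting the seed, the gluing step is not governed by Theorems~\ref{switch} or~\ref{chain} at all: combining two finite-volume pieces along totally geodesic thrice-punctured spheres is a separate argument (it works, but it needs the isometry type of the thrice-punctured sphere plus a cusp-matching check), and you would still need to verify that after all gluings the two outer copies of $S$ assemble into honest closed totally geodesic surfaces rather than merely locally geodesic ones. Your closing alternative---take a fully augmented link and hyperbolize via a right-angled ideal polyhedral decomposition---is much closer in spirit to the approach indicated by the paper's Remark, and would in fact be the cleaner line to pursue.
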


\begin{remark}{\rm
In fact, the link $L$ given by Theorem~\ref{thm:main}
can be any {\em fully alternating link}, see~\cite{Aetal}.
A fully alternating link $L$ in $S\times (0,1)$
is a link that admits a
projection to $S$ that is {\em alternating} in the sense that
the link can be oriented so that two consecutive crossings
have distinct over/under signs and is {\em full}
in the sense that every component of the
complement of the projected image
of $L$ on $S$ is a disk.

We also note that Theorem~\ref{thm:main} is proved in~\cite{hp}
in the case
where $S$ is orientable,
without obtaining a totally geodesic boundary when $\chi(S) < 0$.
The proof in~\cite{hp} could be extended to show these additional facts and
it uses different techniques than those applied in~\cite{Aetal}.}
\end{remark}

\section{Construction of hyperbolic 3-manifolds
with totally geodesic surfaces.}\label{sectg}

\begin{theorem}\label{thmtg}
Let $S$ be a surface with
finite negative Euler characteristic.
There exists a finite volume
hyperbolic 3-manifold $N$
and a proper, two-sided embedding $f\colon S\to N$ with totally geodesic image
$\S$. Moreover:
\begin{enumerate}
\item\label{itemclosed}
If $S$ is closed (resp. orientable), $N$ is closed
(resp. orientable).
\item\label{itemends}
If $e_1$ and $e_2$ are
distinct ends of $\S$, then $N$ contains disjoint cusp ends
$\cC_{1},\,\cC_{2}$ such that, for $i=1,2$, \ $\S\cap \cC_{i}$ is an annular
representative of $e_i$.
\item\label{itemisometry}
$\S$ is a two-sided component of the fixed point set of
an order-two isometry of $N$.
\end{enumerate}
\end{theorem}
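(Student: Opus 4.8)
The plan is to obtain $N$ by doubling a finite-volume hyperbolic $3$-manifold with totally geodesic boundary produced by Theorem~\ref{thm:main}, using the doubling involution for item~\ref{itemisometry}, and then adjusting the cusp structure so that a suitable component of the fixed-point set is diffeomorphic to $S$. First write $S=\widehat S\setminus\partial\widehat S$ with $\widehat S$ a compact surface, $\chi(\widehat S)=\chi(S)<0$, and $\partial\widehat S=\gamma_1\sqcup\cdots\sqcup\gamma_n$ ($n\ge0$). I would cap each $\gamma_i$ off with a once-punctured torus to get a \emph{closed} surface $\bar S\supset\widehat S$ with $\chi(\bar S)=\chi(S)-n<0$, in which $\gamma_1,\dots,\gamma_n$ are essential and pairwise non-isotopic, and which is orientable exactly when $S$ is. Applying Theorem~\ref{thm:main}(\ref{item:a}) to $\bar S$ gives a link $L\subset\bar S\times(0,1)$ such that $M:=(\bar S\times[0,1])\setminus L$ admits a complete, finite-volume hyperbolic metric with totally geodesic boundary $\partial M=\bar S_0\sqcup\bar S_1$, where $\bar S_i:=\bar S\times\{i\}$. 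Let $N_0:=M\cup_{\partial M}M$ be the double. The hyperbolic metric extends smoothly across the totally geodesic boundary, so $N_0$ is a complete hyperbolic $3$-manifold of finite volume whose ends are the doubled cusps of $L$, the doubling involution $\rho_0$ is an order-two isometry, and $\mathrm{Fix}(\rho_0)=\partial M$, which is totally geodesic, two-sided (trivial normal bundle, being a doubling locus) and has each component diffeomorphic to $\bar S$.

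If $S$ is closed ($n=0$) then $\bar S=S$ and all cusps of $N_0$ come from $L$ and are permuted in pairs by $\rho_0$. I would choose filling slopes respecting this pairing and lying outside the finite exceptional set of Thurston's hyperbolic Dehn surgery theorem, and perform the corresponding $\rho_0$-equivariant filling; the result $N$ is a closed, finite-volume hyperbolic $3$-manifold on which $\rho_0$ extends to an isometric involution $\rho$. The filled cusps are disjoint from $\partial M$, so the two copies of $S$ in $\partial M$ are still pointwise fixed by $\rho$; taking $\Sigma$ to be one of them, $\Sigma$ is two-sided and properly embedded (a component of the fixed-point set of an isometry is closed in $N$), and is totally geodesic, since the fixed-point set of an isometric involution is totally geodesic. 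Here $N$ is closed and is orientable precisely when $S$ is (as $M$, $N_0$, $N$ inherit (non-)orientability from $S\times[0,1]$), and item~\ref{itemends} is vacuous.

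If $n\ge1$, let $\gamma_1^\ast,\dots,\gamma_n^\ast\subset\bar S_0$ be the geodesic representatives in the totally geodesic surface $\bar S_0\subset N_0$ of the curves $\gamma_1,\dots,\gamma_n$; these are disjoint simple closed geodesics of $N_0$ lying in $\mathrm{Fix}(\rho_0)$. I would set $N:=N_0\setminus(\gamma_1^\ast\cup\cdots\cup\gamma_n^\ast)$, a complete, finite-volume hyperbolic $3$-manifold — the drilling being realized via the Switch Move Theorem~\ref{switch} and the Untwisted Chain Theorem (Corollary~\ref{chain}) applied along arcs associated with the $\gamma_i^\ast$, or else by the general fact that drilling a simple closed geodesic from a finite-volume hyperbolic $3$-manifold preserves finite-volume hyperbolicity. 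The restriction $\rho$ of $\rho_0$ is an isometric involution with $\mathrm{Fix}(\rho)=\bar S_1\sqcup\big(\bar S_0\setminus(\gamma_1^\ast\cup\cdots\cup\gamma_n^\ast)\big)$; since $\bar S_0\setminus\bigcup_i\gamma_i^\ast$ is the disjoint union of $S$ with $n$ once-punctured tori, one component $\Sigma$ of $\mathrm{Fix}(\rho)$ is diffeomorphic to $S$, and is two-sided, properly embedded and totally geodesic as before. Each drilled geodesic $\gamma_i^\ast$ produces a cusp $\mathcal{C}_i$ of $N$ whose cross-section is met by $\mathrm{Fix}(\rho)$ in two parallel annuli, one of which is $\Sigma\cap\mathcal{C}_i$, an annular representative of the end $e_i$ of $\Sigma$ corresponding to $\gamma_i$; distinct $\gamma_i$ give distinct geodesics, hence distinct cusps, so item~\ref{itemends} holds for any two distinct ends. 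Finally $N$ is non-compact, and inherits (non-)orientability from $S$, which gives item~\ref{itemclosed}.

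I expect the delicate points to be the equivariant hyperbolic Dehn filling in the closed case — verifying that slopes compatible with the $\rho_0$-action and outside the exceptional set exist (including the case of Klein-bottle cusps when $S$ is non-orientable), and that the filled involution is realized by an isometry with the predicted fixed-point set — and the preservation of a complete finite-volume hyperbolic metric under the drilling when $n\ge1$, which is exactly where the Switch Move and Untwisted Chain theorems do their work and which also requires a precise enough picture of the new cusp cross-sections to read off the annular ends needed for item~\ref{itemends}.
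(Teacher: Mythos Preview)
Your overall strategy is sound and, in the closed case, essentially coincides with the paper's Case~3: both take $\bar S\times[0,1]$, remove a fully alternating link as in Theorem~\ref{thm:main}, double along the totally geodesic boundary to get a link complement in $\bar S\times S^1$ with a reflective involution, and then fill equivariantly. For the punctured case, however, your route is genuinely different. The paper does not first enlarge $S$ to a closed $\bar S$ and then drill curves from the fixed surface; instead it works case-by-case (punctured spheres via untwisted chain links in $S^3$, punctured projective planes via a symmetric quotient, and higher-genus punctured surfaces by applying the Switch Move to a geodesic \emph{arc} running from the fixed surface $S_1$ to a component of the link $L$, thereby adding a new link component $C\subset S_1$ that punctures $S_1$, and then the Untwisted Chain Theorem to multiply the punctures). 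Your uniform ``cap off, double, then drill closed curves in the fixed surface'' is more economical conceptually, while the paper's approach keeps every step inside the Switch Move/Untwisted Chain framework, so hyperbolicity never needs an external drilling theorem.

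Two points need tightening. First, the Switch Move Theorem and Corollary~\ref{chain} do \emph{not} realize your drilling: they modify a link along an arc whose endpoints lie on the link, whereas your $\gamma_i^\ast$ are closed curves disjoint from $\Gamma$. The correct justification is the one you list as the alternative---that removing a disjoint union of simple closed geodesics from a finite-volume hyperbolic $3$-manifold again yields a finite-volume hyperbolic $3$-manifold (Kojima, or Thurston's hyperbolization for Haken manifolds); you should cite that and drop the Switch Move reference here. Second, after drilling (or filling) the hyperbolic metric on $N$ is new, so ``the restriction $\rho$ of $\rho_0$ is an isometric involution'' is not literally true: $\rho$ is only a smooth involution of $N$. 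You need the step the paper invokes at the outset of its proof---Mostow rigidity lets one change coordinates by a diffeomorphism isotopic to the identity so that $\rho$ becomes an isometry---after which the fixed-point set (unchanged up to ambient isotopy) is totally geodesic and your identification of its components, and of the annular ends in the drilled cusps, goes through.
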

\begin{proof}
Let $S$ be as stated.
To prove Theorem~\ref{thmtg}, we construct a
complete hyperbolic 3-manifold of finite
volume $N$ together with an order-two
diffeomorphism $\varphi$ that has a two-sided
fixed point set containing a component $\S$ diffeomorphic to $S$.
By the Mostow rigidity theorem,
after changing coordinates
by a diffeomorphism isotopic to the identity,
we may assume that $\varphi$ is an isometry of $N$,
from where it follows that $\S$ is totally geodesic
and item~\ref{itemisometry} holds.

The proof breaks up into cases
which are treated separately.

\begin{case}\label{case1}
$S$ is an $n$-punctured sphere with $n\geq 3$.
\end{case}

\begin{proof}
For $n\geq 3$, let $L_n \subset \rth$
be the $2n$-component untwisted chain link;
the $L_4$ version appears in Figure~\ref{daisy}.
Then, by adding to $\rth$ the point at infinity and
considering $L_n\subset \sn3$,\, $\sn3\setminus L_n$
has an explicit hyperbolic metric
of finite volume,
as described in Example~6.8.7 of~\cite{th1}.

Thinking of the compactified $xy$-plane as a sphere $\cS\subset \esf^3$,
we can view every other component of $L_n$ as being contained in
$\cS$ with the remaining components perpendicular to $\cS$
and symmetric with respect to reflection $R$ through $\cS$.
Then, the restriction of $R$ to the link complement $\sn3\setminus L_n$ is
an order-two isometry of the hyperbolic metric described above,
with two-sided fixed point set
$\cS\setminus L_n$. This fixed point
set contains an $n$-punctured sphere $S_n$,
where the $n$ punctures come from the $n$ components
of $L_n$ in $\cS$. It follows directly that $S_n$ is totally geodesic and
satisfies the statements of the theorem.
\end{proof}

\begin{figure}
\begin{center}
\includegraphics[width=0.68\textwidth]{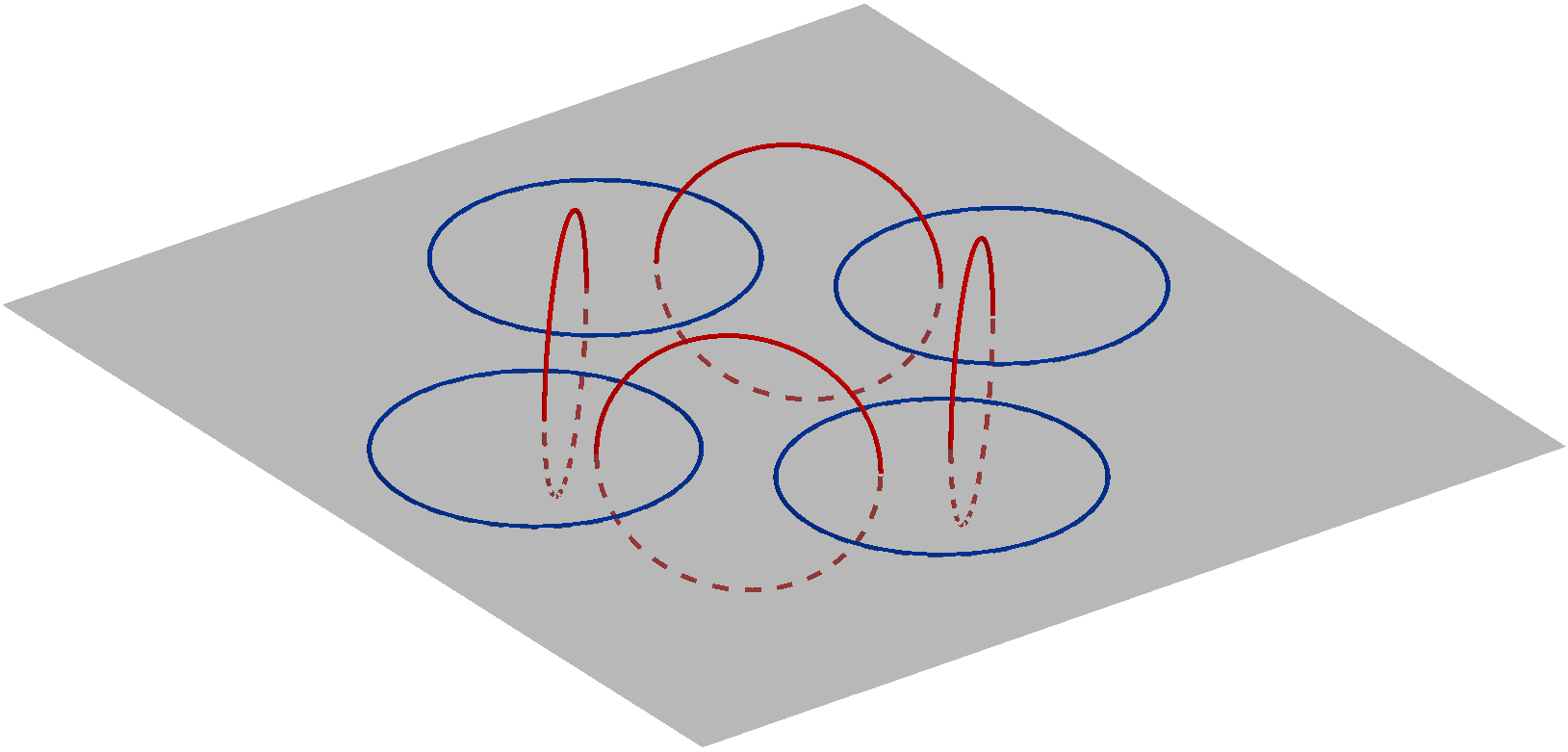}
\caption{The untwisted chain link $L_4$.}
\label{daisy}
\end{center}
\end{figure}

\begin{case}\label{case2}
$S$ is an $n$-punctured projective plane
with $n\geq2$.
\end{case}

\begin{proof}
Fix $n\geq 2$ and let $k = n-1$.
Let $L_{2k}$ be the $4k$-component untwisted chain link in $\sn3$.
As in Case~\ref{case1}, $\cS$ denotes the compactification
of the $xy$-plane and we assume that every other component of $L_{2k}$ is contained
in $\cS$.
Furthermore, we assume that the $2k$ components of $L_{2k}$
that are not contained in $\cS$
are perpendicular to $\cS$ and lie
on the unit sphere $\sn2$ centered at the origin of $\rth$
and that $L_{2k}$ is invariant under the inversion $\phi({\bf x}) = -{\bf x}$
through the origin and the reflection $\Phi$ through $\esf^2$.

Let $Z\subset \sn3$ denote the $z$-axis with the point at infinity.
Define $J_k = L_{2k}\cup Z$ and
let $M = \sn3\setminus J_k$. Then, $M$
is a $2k$-cover of $\sn3\setminus B_3$,
where $B_3$ is the Borromean ring with three components.
Since $\sn3\setminus B_3$ admits a complete hyperbolic metric of finite
volume (see~\cite[Section~3.4]{th1}), there exists a complete hyperbolic
metric $g$ on $M$.
Moreover, $\phi$ and $\Phi$
restrict to isometries of $(M,g)$.

Note that $\sn2\setminus J_k$ contains a connected
component $\wt{S}$ which is a $(2k+2)$-punctured sphere,
where $2k$ punctures come
from $L_{2k}$ and the other two from $\sn2\cap Z$. By construction,
$\Phi\vert_{\wt{S}} = {\rm Id}_{\wt{S}}$, and so $\wt{S}$ is totally geodesic.

Let $N = M/\phi$. Since $\phi$
is a fixed-point free, orientation reversing
order-two isometry of $(M,g)$, $N$ is
a non-orientable hyperbolic manifold that is
double covered by $M$.
Since $\phi\vert_{\sn2}$ is the antipodal map and
$\phi(\wt{S}) = \wt{S}$, the surface
$S = \wt{S}/\phi$ is a $(k+1)$-punctured projective plane in $N$.
Also, since $\Phi$ and $\phi$ commute with each other,
the map $\Phi$ descends to $N$ as an order-two isometry of $N$
which contains $S$ in its fixed point set.
Since $k+1 = n$ and $S$ satisfies
the properties stated by Theorem~\ref{thmtg}, this
proves Case~\ref{case2}.
\end{proof}

\begin{case}\label{case3}
$S$ is closed.
\end{case}
\begin{proof}
For the following construction, see Figure~\ref{figP}~(a).
Consider $\sn1$ to be the unit circle in the $yz$-plane
and let $P= {S}\times \esf^1$. Let $\mathbb{S}^1_+ = \sn1\cap\{z\geq0\}$,
$\mathbb{S}^1_- = \sn1\cap\{z\leq0\}$,
$M_1 = S\times\mathbb{S}^1_+$ and $M_2 = S\times\mathbb{S}^1_-$.
Then, $M_1,\,M_2$ are subsets of $P$
glued along their boundary surfaces
$S_1 = S\times\{(-1,0)\}$
and $S_2 = S\times\{(1,0)\}$.
Let $R\colon P\to P$ be the
reflective symmetry
interchanging $M_1$ with $M_2$;
the fixed point set of $R$ is $S_1 \cup S_2$.

By Theorem~\ref{thm:main}, there exists
a link $L$ in ${\rm int}(M_1)$ such that $M_1\setminus L$
admits a finite volume hyperbolic metric with
totally geodesic boundary $S_1\cup S_2$.
Let
$L'=R(L)\subset {\rm int}(M_2)$ and
$\Gamma=L\cup L'$.
Then, $P\setminus \Gamma$
admits a complete hyperbolic metric $g$ for which $R$ is an isometry
and the surfaces $S_1,S_2$ are
totally geodesic surfaces forming the fixed point set of $R$.

Note that $P\setminus \Gamma$ is orientable if and only
if $S$ is orientable. Moreover, after performing an appropriate
Dehn filling in the ends of $P\setminus \Gamma$ in a symmetric manner
with respect to $R$,
we obtain a closed hyperbolic 3-manifold
$N$, where $S_1$ and $S_2$ are each
as stated in Theorem~\ref{thmtg}.
\end{proof}

\begin{figure}
\begin{minipage}{0.45\textwidth}
\begin{center}
\includegraphics[width=0.9\textwidth]{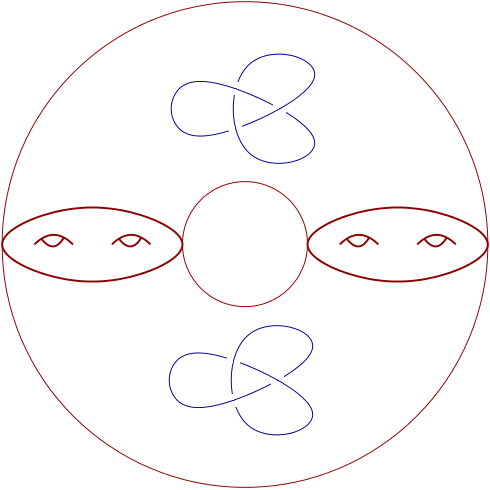}

(a)
\end{center}
\end{minipage}
\begin{minipage}{0.45\textwidth}
\begin{center}
\includegraphics[width=0.9\textwidth]{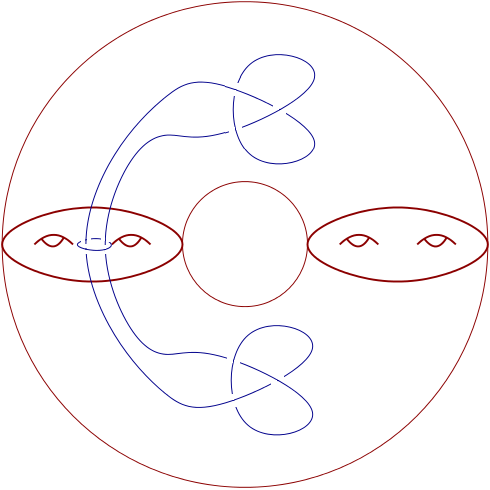}

(b)
\end{center}
\end{minipage}
\caption{(a) is the manifold $P=S\times \sn1$ with
the link $L$ and its reflection $R(L)$. (b)
is the manifold $P=S'\times \sn1$ with the link $L_1$ that
creates a puncture to $S_1$.\label{figP}}
\end{figure}
\begin{case}\label{case4}
$S$ is $S'$ punctured one time, where
$S'$ is a closed surface with $\chi(S')< 0$.
\end{case}
\begin{proof}
The starting point to this case is the closed
manifold $P = S'\times \sn1$.
As in Case~\ref{case3}, there exists a link $\G = L \cup R(L)$,
where $L$ is a link in the interior of
$M_1 = S'\times \mathbb{S}^1_+$, such that
$P\setminus \G$ is hyperbolic,
the reflection $R$ restricts to an isometry
and the fixed point set of $R$ consists of the two
totally geodesic surfaces $S_1 = S'\times \{(-1,0)\}$
and $S_2 = S'\times \{(1,0)\}$.

In the hyperbolic metric of $M_1\setminus L$,
let $\a_1$ be a minimizing geodesic ray from $S_1$ to $L$.
Then, $\a_1$ is proper, perpendicular to $S_1$
and $\a_1$ does not intersect $S_2$.
Let $\a = \a_1\cup R(\a_1)$. Then, $\a$ is
a complete geodesic of $P\setminus \G$
and the closure of $\a$ in $P$ admits a neighborhood
$\cB\subset P$ with the following properties: $R(\cB) = \cB$,
$\cB \cap S_1 = E$ is a disk, $\cB\cap S_2 = \varnothing$ and
$\cB$ intersects $\G$ in
two arcs $g\subset L$ and $g'=R(g)\subset R(L)$. Then, $\cB$ satisfies
the hypothesis of Theorem~\ref{switch}.
Then, we can replace the arcs $g\cup g'$ in $\cB\cap \G$ by
a tangle $\g_1\cup\g_2\cup C$ as in Figure~\ref{figaugmented}~(b)
to form a new link $L_1\subset P$,
where we may choose $C$ as
a circle in $\rm{int}(E)$ that bounds a disk
in $E$, punctured once by $\a$ (see Figure~\ref{figP}~(b)).
Then, Theorem~\ref{switch} gives
that $P\setminus L_1$ admits a complete,
hyperbolic metric of finite volume,
and since we may choose the arcs $\g_1,\,\g_2$ to be invariant under $R$,
it follows that $R$ restricts to an isometry $\varphi$.
Note that the fixed point set of $\varphi$ contains three connected
components, one being $S_2$ and the other two being
the connected components of $S_1\setminus (C\cup \g_1\cup \g_2)$,
one a three-punctured sphere and the other diffeomorphic
to $S$ and satisfying the conclusions required by Theorem~\ref{thmtg}.
\end{proof}

\begin{case}\label{case5}
$S$ is a torus or a Klein bottle punctured once.
\end{case}
\begin{proof}
Let $T$ be either a torus or a Klein bottle and let
$P_1$ be the product manifold $T\times[0,1]$. Let
$\G_1$ be a link in int$(P_1)$ such that
int$(P_1\setminus \G_1)$
is hyperbolic, as given by item~\ref{item:b} of Theorem~\ref{thm:main}.
Let $P = T\times[-1,1]$ and $R\colon P\to P$ be the reflection
$R(x,t) = (x,-t)$; take $P_2$ as the reflected image of $P_1$ in
$P$, with respective hyperbolic link $\G_2 = R(\G_1)\subset P_2$.
Also, let $\a_1$ be a complete geodesic in the hyperbolic metric
of int$(P_1\setminus \G_1)$ with one endpoint in $T\times\{0\}$
and another endpoint in a component $L_1$ of $\G_1$ and let
$\a_2 = R(\a_1)$.

Let $\a\subset P$ be the concatenation of $\a_1$ and $\a_2^{-1}$.
Then, $\a$ is an arc with one endpoint in $L_1$ and another endpoint
in $L_2 = R(L_1)$. Let $\cB$ be a regular
neighborhood of $\a$ in $P$, invariant under $R$ and that intersects
$\G = \G_1\cup \G_2$ in two arcs $g\subset L_1$ and $g' = R(g)\subset L_2$
and intersects $T$ in a disk $\Delta$.
Let $\G'$ be the link in int$(P)$ obtained from
$\G$ by replacing $g\cup g'$ in $\cB$
by the tangle $\g_1\cup \g_2\cup C$
as in Figure~\ref{figaugmented}~(b), where $C\subset \Delta$.
Then, Theorem~5.1 in~\cite{amr2} implies that the interior of the
link complement $P\setminus \G'$ admits a complete hyperbolic
metric of finite volume.
Furthermore, we may choose $\g_1\cup \g_2\cup C$ so that
it is invariant under $R$. Hence, $R$ restricts to an
isometry $\varphi$ of the hyperbolic metric of $P\setminus \G'$.
This proves Case~\ref{case5}, since
the fixed point set of $\varphi$
contains a three-punctured sphere $D$ bounded by $C$
in $T$ and a component $T\setminus \ol{D}$ diffeomorphic to $S$
that satisfies the properties required
by Theorem~\ref{thmtg}.
\end{proof}

The remaining cases to treat in Theorem~\ref{thmtg}
are those where $S$ is diffeomorphic to $S'$ punctured $n\geq 2$ times,
where $S'$ is
a closed surface with $\chi(S')\leq 0$.
If $\chi(S')<0$,
let $P = S'\times \sn1$
with respective hyperbolic link $L_1$ as
given by Case~\ref{case4}.
Let $\cB\subset P$ be the ball where the Switch Move Theorem was
applied. Then, in $P\setminus L_1$, $\cB$ satisfies
the hypothesis of the Untwisted
Chain Theorem, Corollary~\ref{chain}.
Then, we can replace the circle component $C$ of $\cB\cap L_1$
by an untwisted chain with $2n-1$ components as in Figure~\ref{chainlemma1}~(b),
where every other component is in the interior of $D = \cB\cap S_1$
and the remaining
components lie in $\cB$ and are symmetric with respect to $R$ to
create a hyperbolic link $L_n \subset P$.
Again, the reflection $R$ restricts to
an isometry $\varphi$ of the hyperbolic metric of
$P\setminus L_n$, and the fixed point set of $\varphi$ contains
$S_2$, $n-1$ three-punctured spheres and a surface
diffeomorphic to $S$, which finishes the proof
of Theorem~\ref{thmtg} when $\chi(S')<0$. The proof for the case
when $S'$ is a torus or a Klein bottle is analogous.
\end{proof}

\section{The Proof of Theorem~\ref{admiss}.}\label{seccmc}

To finish the proof of Theorem~\ref{admiss}, it suffices
to prove Theorem~\ref{thmcmc} below.

\begin{theorem}\label{thmcmc}
Let $H\in[0,1)$ and suppose that $S$ is a connected
surface of finite topology and negative
Euler characteristic.
Then, there exists a hyperbolic 3-manifold $N$ of
finite volume and a proper, two-sided embedding $f\colon S\to N$, with image $\S$
having mean curvature $H$ and satisfying:

\begin{enumerate}
\item $\S$ is totally umbilic.\label{itema}
\item $\S$ does not separate $N$.\label{itemb}
\item If $S$ is closed (resp. orientable),
then $N$ can be chosen to be closed (resp. orientable).\label{itemc}
\item Each end $e$ of $\S$ admits an annular end representative $E$ which
is embedded in a cusp end $\cC_e$ of $N$.
In addition, if $e$ and $f$ are two
distinct ends of $\S$, the respective cusp ends $\cC_e$, $\cC_f $
are distinct.\label{itemd}
\end{enumerate}
\end{theorem}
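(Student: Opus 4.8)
The plan is to realize $N$ as a suitable finite cover of the hyperbolic $3$-manifold produced by Theorem~\ref{thmtg}, and $\S$ as an equidistant surface of the totally geodesic surface furnished by that theorem. First I would apply Theorem~\ref{thmtg} to $S$ to get a finite volume hyperbolic $3$-manifold $N_0$, a proper two-sided totally geodesic embedding $f_0\colon S\to N_0$ with image $\S_0$, and an order-two isometry $\varphi$ of $N_0$ having $\S_0$ as a component of its fixed point set, with $N_0$ closed (resp.\ orientable) when $S$ is, and with the ends of $\S_0$ realized by annular representatives in pairwise distinct cusp ends of $N_0$. From the case analysis in the proof of Theorem~\ref{thmtg} I would also record that $\mathrm{Fix}(\varphi)$ always has a component $F_0$ distinct from $\S_0$ (three-punctured spheres, or the auxiliary surface $S_2$, always occur); equivalently $N_0$ is the double of the orbifold $N_0/\varphi$ along its totally geodesic boundary $\mathrm{Fix}(\varphi)\supseteq\S_0\sqcup F_0$, and since cutting a double along one boundary component of a connected side carrying further boundary keeps it connected, $\S_0$ does not separate $N_0$. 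When $H=0$ this already proves the theorem with $N=N_0$ and $\S=\S_0$: totally umbilic of mean curvature $0$, two-sided, proper, non-separating, with its ends in distinct cusps as in item~4.

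For $H\in(0,1)$ put $T=\tanh^{-1}(H)>0$, and recall that in a hyperbolic $3$-manifold the equidistant surface at distance $T$ on one side of a totally geodesic surface is totally umbilic with mean curvature $\tanh T=H$. So it suffices to find a finite cover $p\colon N\to N_0$ with: (i) $\varphi$ lifting to an order-two isometry $\wh\varphi$ of $N$; (ii) $\S_0$ possessing a homeomorphic lift $\S_1$ (i.e.\ $p_*\pi_1(N)\supseteq\pi_1(\S_0)$ for compatible basepoints); (iii) the normal exponential map of $\S_1$ an embedding on the $2T$-neighborhood $\S_1\times(-2T,2T)$. Granting such an $N$, one has $\S_1\subseteq\mathrm{Fix}(\wh\varphi)$ and $\mathrm{Fix}(\wh\varphi)\supseteq\S_1\sqcup p^{-1}(F_0)$ with $p^{-1}(F_0)$ nonempty and disjoint from $\S_1$, so $\S_1$ — and every surface parallel to it — is non-separating in $N$. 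I would then take $\S$ to be the $T$-parallel surface to $\S_1$ on its mean-convex side (Definition~\ref{deftparallel}), and $f$ the composition of a homeomorphism $S\to\S_1$ with the parallel map. By~(iii) this $f$ is a proper two-sided embedding and $\S$ is totally umbilic of mean curvature $H$; $\S$ is isotopic to $\S_1$ (flow through the embedded tube), hence non-separating; properness, two-sidedness, orientability and closedness descend from $N_0$ and $\S_1$; and each end of $\S\cong S$ is the $T$-parallel of the corresponding annular end of $\S_1$, lying in a slightly shrunk cusp end of $N$, distinct ends occupying distinct cusps because the ends of $\S_1\cong\S_0$ do and these lift into disjoint cusp preimages. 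This yields items~1--4.

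The technical heart is the construction of the cover $N$, which uses the LERF property of $\pi_1(N_0)$. Lifting to $\Pi\colon\hn3\to N_0=\hn3/\Gamma$, fix a lift $\wt\S_0=\hn2$ of $\S_0$, with stabilizer $\Gamma_0\cong\pi_1(\S_0)$; two-sidedness and embeddedness of $\S_0$ give that $\Gamma_0$ preserves the two sides of $\wt\S_0$ and that $\gamma\wt\S_0\neq\wt\S_0$ for $\gamma\in\Gamma\setminus\Gamma_0$. Because the normal exponential map of a totally geodesic plane is a global diffeomorphism $\hn2\times\R\cong\hn3$ (Fermi coordinates), for a cover $\hn3/K$ with $\Gamma_0\le K$ the only obstruction to an embedded $2T$-tube about the image of $\wt\S_0$ is an element $\gamma\in K\setminus\Gamma_0$ with $d(\wt\S_0,\gamma\wt\S_0)<4T$; the set $B$ of all such $\gamma$ is a union of $\Gamma_0$-double cosets, and it meets only finitely many of them, because $\S_0$ has finite area and finitely many ends, each an annular end inside a cusp end of $N_0$ — one checks that only finitely many $\Gamma_0$-orbits of translates $\gamma\wt\S_0$ lie within distance $4T$ of $\wt\S_0$, using that a $4T$-neighborhood of $\S_0$ has finite volume together with a direct analysis inside the cusp ends. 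For each bad coset $\Gamma_0\gamma_i\Gamma_0$, LERF supplies a finite-index $K_i\le\Gamma$ with $\Gamma_0\le K_i$ and $\gamma_i\notin K_i$, so that $\Gamma_0\gamma_i\Gamma_0\cap K_i=\varnothing$ automatically. Letting $\wt\varphi$ be the reflection of $\hn3$ in the plane $\wt\S_0$ — it normalizes $\Gamma$, centralizes $\Gamma_0$, and covers $\varphi$ — the set $B$ is invariant under conjugation by $\wt\varphi$, so $K:=\bigcap_{i}\bigl(K_i\cap\wt\varphi K_i\wt\varphi^{-1}\bigr)$ is finite-index in $\Gamma$, contains $\Gamma_0$, is $\wt\varphi$-invariant, and is disjoint from $B$; then $N:=\hn3/K$ has properties (i)--(iii).

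I expect the main obstacle to be precisely the finiteness of the bad double-coset set $B$ — a geometric-finiteness statement for the finite-area totally geodesic surface $\S_0$ that requires a careful analysis in the cusp ends — together with checking that the whole package can be assembled $\varphi$-equivariantly with a homeomorphic lift over $\S_0$ and an embedded normal tube of the prescribed radius. The remaining ingredients (umbilicity and mean curvature of equidistant surfaces, descent of properness, two-sidedness, the cusp structure, orientability and closedness, and non-separation from a disconnected fixed point set) are routine.
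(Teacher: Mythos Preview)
Your overall strategy---realize $\S$ as the $T$-parallel surface to a totally geodesic lift $\wt\S$ in a finite cover produced via LERF---is the paper's strategy as well, and your non-separation argument via the extra fixed component $F_0$ of $\varphi$ is correct (and more explicit than the paper). The difference is in \emph{how} the cover is chosen. The paper first proves a geometric lemma (Lemma~\ref{proptg}): the one-sided parallel map $f_t$ is always injective on the annular cusp ends of $\S_0$, and if injectivity first fails at $t_0$ then there is a closed geodesic of length $4t_0$ orthogonal to $\S_0$. Since there are only finitely many closed geodesics of length below $5T$, LERF is applied inductively, one geodesic at a time, to unwrap each in a finite cover.

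Your LERF step instead targets $B=\{\gamma\in\Gamma\setminus\Gamma_0:d(\wt\S_0,\gamma\wt\S_0)<4T\}$, and you need $B$ to meet only finitely many $\Gamma_0$-double cosets. When $S$ is closed this works, but when $\S_0$ has cusps it is \emph{false}, and your condition~(iii) is unachievable. Fix a cusp of $N_0$ met by $\S_0$, with peripheral subgroup $P\cong\Z^2$ and $P\cap\Gamma_0=\langle p_1\rangle$; pick $p_2\in P\setminus\langle p_1\rangle$. Each translate $p_2^k\wt\S_0$ is asymptotic to $\wt\S_0$, so $p_2^k\in B$ for all $k\neq0$; and since the ideal boundary circles $\partial_\infty\wt\S_0$ and $\partial_\infty(p_2^j\wt\S_0)$ meet only at the parabolic fixed point, one checks that the double cosets $\Gamma_0 p_2^k\Gamma_0$ are pairwise distinct. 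Consequently no finite cover carries an embedded two-sided $2T$-tube about $\wt\S$: any finite-index $K$ meets $P$ in a rank-$2$ group, so still contains such elements. The repair is to drop the symmetric tube and ask only that the \emph{one-sided} map $f_T$ be injective; in a horoball the relevant lifts $\{y=kb+z\sinh T\}$ are parallel and disjoint (they are equidistants on the \emph{same} side of parallel half-planes), so cusps never obstruct $f_T$. This is exactly the content of Lemma~\ref{proptg}, after which the remaining obstructions are compactly supported and genuinely finite---the paper encodes them as short orthogonal closed geodesics, though one could equally encode them as finitely many double cosets once the cusp analysis is in hand.
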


To prove Theorem~\ref{thmcmc},
we make use of the totally geodesic examples provided
by Theorem~\ref{thmtg}
to construct hyperbolic manifolds of finite volume
with two-sided, totally umbilic surfaces
which are properly embedded with any admissible finite topology and
mean curvature $H\in (0,1)$. In order to
define these associated totally umbilic examples, we recall,
from Definition~\ref{deftparallel},
that if $N$ is a 3-manifold and $f\colon S\to N$ is a two-sided embedding
with image $\S$ and unitary normal vector field $\eta$,
then the $t$-parallel surface to $\S$ is the image
$\S_t$ of the immersion $f_t\colon S\to N$ defined by
$f_t(x) = \exp(t\eta(f(x)))$.
Note that
the $t$-parallel surface $\S_t$ to
a totally geodesic surface $\S$ in a hyperbolic 3-manifold $N$
is totally umbilic and
has mean curvature $\tanh(t)\in(0,1)$.
For convenience, we will often assume that
the domain of the immersion $f_t$ is
$\S$ instead of the abstract surface $S$.
Note that the ambient distance of two respective points
$x\in\S$ and $f_t(x) \in \S_t$ is uniformly bounded by $t$.

\begin{lemma}\label{proptg}
Let $f\colon S\to N$ satisfy the properties
given by Theorem~\ref{thmtg},
and assume that $N$ has $m\geq 0$ ends.
Then, for any $T>0$:
\begin{enumerate}
\item\label{ii1}
There exists a pairwise disjoint collection of cusp end representatives
of the ends of $N$,
$\{\cC_1(T),\ldots,\cC_m(T)\}$, such that
for any $t\in(0,T]$, $f_t$ is injective on
$f^{-1}(\cup_{i=1}^m\cC_i(T))$.
\item\label{ii2} For $t\in(0,T]$ sufficiently small, the immersions
$f_t\colon S\to N$ are injective.
\item\label{itemgeodesic}
If $\Lambda = \{t>0\mid f_t\text{ is not injective}\} \neq \varnothing$,
then $t_0 = \inf \Lambda>0$ and there exists
a closed geodesic in $N$,
meeting $\S$ orthogonally with length $4t_0$.
\end{enumerate}
\end{lemma}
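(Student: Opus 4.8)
The plan is to establish the three items in order, building everything on the normal exponential flow off the totally geodesic surface $\S$ and, for item~\ref{itemgeodesic}, on the order-two isometry provided by item~\ref{itemisometry} of Theorem~\ref{thmtg}. For item~\ref{ii1}, I would first choose pairwise disjoint cusp end representatives $\cC_1(T),\dots,\cC_m(T)$ of the $m$ cusps of $N$ thin enough that each closed $T$-neighbourhood $\ov{\cN_T(\cC_i(T))}$ is still an embedded cusp neighbourhood with these $T$-neighbourhoods pairwise disjoint, and $\S\cap\cC_i(T)$ is either empty or a single annular representative of an end of $\S$ (possible by item~\ref{itemends} of Theorem~\ref{thmtg}, since distinct ends of $\S$ exit distinct cusps). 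Passing to the family of horoballs of $\hn3$ covering $\cup_i\cC_i(T)$, thinness guarantees that the $T$-neighbourhoods of distinct horoballs are disjoint, as the mutual distances of distinct horoballs of the family tend to $\infty$ as the cusps shrink. Then, given $t\le T$ and a self-intersection $f_t(x)=f_t(y)=z$ with $x\ne y$ and $f(x),f(y)\in\cup_i\cC_i(T)$, I lift $z$ to $\wt z$ and lift the two normal segments of length $t$ ending at $z$ so that they terminate at $\wt z$; their initial points $\wt x,\wt y$ are lifts of $f(x),f(y)$ with $d(\wt z,\wt x),d(\wt z,\wt y)\le T$, so by disjointness $\wt x,\wt y$ lie in the same horoball $B$ (covering a single $\cC_i(T)$). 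Inside $B$ the components of $\Pi^{-1}(\S)$ are vertical half-planes over parallel lines of $\partial_\infty\hn3\setminus\{\infty\}$, all carrying parallel lifts of the unit normal $\eta$ (equivariance under the parabolic stabiliser of $B$); if $\wt x,\wt y$ lie on distinct such planes, their $t$-equidistant surfaces on the $\eta$-side are disjoint (half-planes over parallel lines, tilted the same way), contradicting $\wt z$ lying on both, while if they lie on the same plane $P$ then $w\mapsto\exp(t\wt\eta(w))$ is injective on $P$ (no focal points in $\hn3$), forcing $\wt x=\wt y$ and $x=y$. This proves item~\ref{ii1}.

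For items~\ref{ii2} and~\ref{itemgeodesic} I would first record the \emph{confinement} consequence of item~\ref{ii1}: for the cusp neighbourhoods above, any self-intersection pair $(x,y)$ of $f_t$ with $t\le T$ has at least one of $f(x),f(y)$ outside $\cup_i\cC_i(T)$, hence both lie in the fixed compact set $K=\ov{\cN_{2T}(N\setminus\cup_i\cC_i(T))}$, so $x,y\in f^{-1}(K)$ (compact). Since $f_t$ is an immersion for all $t\ge0$ and $f_0=f$ is an embedding, a compactness argument gives item~\ref{ii2}: if $f_{t_k}$ were non-injective with $t_k\to0$, extracting convergent self-intersection pairs $(x_k,y_k)\to(x_*,x_*)$ in $f^{-1}(K)$ (the limits coincide as $f$ is injective) would contradict injectivity of $f_t$ on a fixed small ball about $x_*$ for $t$ small; thus $\inf\Lambda>0$. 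For item~\ref{itemgeodesic}, assuming $\Lambda\ne\es$, I pick $t^*\in\Lambda$, apply item~\ref{ii1} with $T=t^*$ so that $t_0:=\inf\Lambda\le t^*$, and run the same compactness along $t_k\downarrow t_0$; the limit pair $x_*,y_*$ is now forced to be distinct ($f_{t_0}$ is an immersion, close to an embedding on a small ball about $x_*$), so $t_0>0$, $t_0\in\Lambda$, and $f_{t_0}(x_*)=f_{t_0}(y_*)$ with $p:=f(x_*)\ne f(y_*)=:q$.

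To extract the closed geodesic I would lift this configuration. There are distinct components $P_1\ni\wt p$, $P_2\ni\wt q$ of $\Pi^{-1}(\S)$ whose $t_0$-equidistant surfaces on the $\eta$-side meet at a lift $\wt z_0$ of $z_0:=f_{t_0}(x_*)$; here $P_1\ne P_2$ because distinct perpendiculars to a single totally geodesic plane of $\hn3$ never meet. Injectivity of $f_t$ for $t<t_0$ makes the family $\{P_\a^t\}$ pairwise disjoint for such $t$, so $P_1^t\cap P_2^t=\es$ for $t<t_0$; this forces $P_1\cap P_2=\es$, since a transverse crossing along a geodesic line would persist for all small $t>0$ under the $C^1$-small deformations $P_i\rightsquigarrow P_i^t$. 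Using the convexity of the sets $\{d(\cdot,P_i)\le t\}$ together with $P_1^{t_0}\cap P_2^{t_0}\ne\es$ and $P_1^t\cap P_2^t=\es$ for $t<t_0$, the only consistent configuration is that $P_1,P_2$ admit a common perpendicular $\de$ of length exactly $2t_0$, with $\wt z_0$ its midpoint, $\wt p,\wt q$ its endpoints, and $\wt\eta$ directed along $\de$ toward the opposite plane at each endpoint. Hence $\ov\de=\Pi(\de)$ is a geodesic segment of length $2t_0$ from $p$ to $q$, smooth through $z_0$, meeting $\S$ orthogonally at $p$ and at $q$. Finally, let $\varphi$ be the order-two isometry of $N$ fixing $\S$ (item~\ref{itemisometry} of Theorem~\ref{thmtg}); as $\varphi\ne\mathrm{id}$, its differential equals $-\mathrm{id}$ on the normal line of $\S$, so $\varphi_*\eta=-\eta$ along $\S$. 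Then $\varphi(\ov\de)$ is another geodesic of length $2t_0$ from $p$ to $q$, through $z_0':=\varphi(z_0)$, and $z_0'\ne z_0$ (otherwise $\varphi$ would fix three points of $\ov\de$, hence $\ov\de$ pointwise, impossible since $\ov\de\perp\S$ at $p$). Concatenating $\ov\de$ from $p$ to $q$ with $\varphi(\ov\de)$ traversed backwards from $q$ to $p$ yields a closed curve of length $4t_0$; by $\varphi_*\eta=-\eta$ the incoming and outgoing velocities agree at $p$ and at $q$ (and are normal to $\S$), and smoothness at $z_0,z_0'$ is automatic, so this is a closed geodesic of length $4t_0$ meeting $\S$ orthogonally, proving item~\ref{itemgeodesic}.

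I expect the delicate points to be, first, arranging the cusp neighbourhoods in item~\ref{ii1} so that the confinement of all the self-intersections ($t\le T$) to one fixed compact set is genuinely uniform in $t$; and second, the case analysis for $P_1,P_2$ in $\hn3$ — ruling out the asymptotic configuration and the configurations in which the two equidistant surfaces tilt away from one another — which is exactly where the side on which $\eta$ points enters decisively.
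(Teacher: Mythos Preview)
Your proposal is correct and follows essentially the same route as the paper's proof: item~\ref{ii1} via the horoball geometry of the cusps, item~\ref{ii2} by confining all self-intersections to a compact core using item~\ref{ii1} and then invoking compactness, and item~\ref{itemgeodesic} by a compactness argument showing $t_0\in\Lambda$, followed by the tangential first contact yielding a smooth geodesic arc of length $2t_0$ orthogonal to $\S$ at both endpoints, which is then doubled by the order-two isometry $\varphi$. The only cosmetic difference is that for item~\ref{itemgeodesic} you lift to $\hn3$ and phrase the tangency as a common-perpendicular statement for two disjoint lifts of $\S$, whereas the paper stays in $N$ and reads the smoothness of $\g_x\cup\g_y^{-1}$ directly from the tangency of $f_{t_0}(U)$ and $f_{t_0}(V)$; these viewpoints are equivalent, and your closing worry about the asymptotic and ``tilting-away'' configurations is in fact already handled by that tangency argument.
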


\begin{proof}
First, we notice that if $S$ is compact, the proofs of
items~\ref{ii1} and~\ref{ii2} are immediate.

Assume now that $S$ is noncompact and let $e_1,\,\ldots,\,e_n$ be
the ends of $\S = f(S)$.
An immediate consequence of item~\ref{itemends} of Theorem~\ref{thmtg} is that there exist pairwise
disjoint annular end representatives $E_1,\,\ldots,\,E_n \subset \S$
and a collection $\{\cC_1,\,\ldots,\,\cC_m\}$
of pairwise disjoint cusp ends of $N$ such that, after possibly passing
to subends, it holds, for $1\leq i\leq n$, that $\S\cap\cC_i =E_i$ with
$\partial \cC_i$ intersecting $\S$ orthogonally and,
for $n+1\leq j\leq m$, $\cC_j\cap \S = \varnothing$.

For a given $i\in\{1,\ldots, n\}$, consider the cusp end $\cC_i$. There exists
a family of compact surfaces $\{\cT_i(s)\}_{s\geq 0}$ (if $\cC_i$ is orientable,
each $\cT_i(s)$ is a torus, otherwise each $\cT_i(s)$ is a Klein bottle)
arising from the descent
of parallel horospheres of $\hn3$ via the universal covering projection,
parameterized by $s = {\rm dist}(\cT_i(s),\cT_i(0) = \partial \cC_i)$
such that $\cup_{s\geq0} \cT_i(s) = \cC_i$.
For any $\lambda > 0$, let
\begin{equation}\label{eqnotation}
\cC_i(\lambda) = \cup_{s\geq \lambda} \cT_i(s),\quad
N(\lambda) = N\setminus \left(\cup_{i=1}^n\cC_i(\lambda)\right),
\end{equation}
and
\begin{equation}\label{eqnotation2}
E_i(\lambda) =E_i \cap \cC_i(\lambda),\quad
\S(\lambda) = \S\cap N(\lambda).
\end{equation}

The assumption that $\S\cap \cC_i = E_i$, for $1\leq i\leq n$,
implies that $E_i(\lambda) = \S \cap \cC_i(\lambda)$. In particular,
for every $\lambda > 0$, we can express $\S$ as a disjoint union
$\S = \left(\cup_{i=1}^n E_i(\lambda)\right)\cup \S(\lambda)$.

Fix $T>0$, $t\in(0,T]$ and
$i\in\{1,\,2,\,\ldots,\,n\}$.
Using the universal covering of $\cC_i$ by a horoball
of $\hn3$, it is easy to see that, for any $s> T$, $E_i$ is orthogonal to $\cT_i(s)$ and
$E_i\cap \cT_i(s)$ is a geodesic
of $\cT_i(s)$, which is injectively mapped by $f_t$ to
a geodesic of $\cT_i(h_t(s))$, where $h_t(s)\in (s-t,s)$ and
the function $h_t$ is increasing;
hence $f_t\vert_{E_i(T)}$ is injective.
In order to finish the proof of item~\ref{ii1}, just note that
$f_t(E_i(T))\subset \cC_i$ and, if $i\neq j$,
$\cC_i\cap \cC_j = \varnothing$.

Next, we prove the second statement of the lemma.
Since $\ol{\S(3T)}$ is compact,
then there exists some
$\ve(T)\in(0,T)$ such that $f_t\vert_{\S(3T)}$ is
injective for all $t\in(0,\ve(T))$. We claim that
$f_t$ is injective for all $t\in(0,\ve(T))$.

Assume that $f_t(x) = f_t(y) = p$, for some $x, y\in \S$ and $t\in(0,\ve(T))$.
If $x\neq y$,
since $f_t\vert_{\S(3T)}$ is injective and
$\{E_1(3T),\,E_2(3T),\,\ldots,\,E_n(3T),\,\S(3T)\}$ is a
partition of $\S$,
there exists some $i\in\{1,2,\ldots, n\}$ such that either
$x\in E_i(3T)$ or $y\in E_i(3T)$.
In particular, $p\in \cC_i(2T)$.
Without loss of generality, assume that $x\in E_i(3T)$.
Item 1 and the fact that $t \in(0,T)$ gives that $f_t\vert_{E_i(T)}$ is injective.
Moreover, $E_i(3T)\subset E_i(T)$, and therefore $y \not\in E_i(T)$.
There are two possibilities: either
$y \in E_j(T)$ for $j\neq i$ or $y \in \S(T)$. If
$y \in E_j(T)$, then $p \in \cC_j$, which is impossible since
$\cC_j\cap\cC_i = \varnothing$. On the other hand, if $y \in \S(T)$, then
$p \in N(2T)$. However, $N(2T)\cap \cC_i(2T) = \varnothing$,
and this contradiction proves item~\ref{ii2}.

We next prove item~\ref{itemgeodesic}.
First, item~\ref{ii2} gives that $t_0 >0$, and
our next argument shows that $f_{t_0}$ is not injective.

Choose $\lambda > t_0$.
By the definition of $t_0$, there exist a sequence
$\{t_k\}_{k\in\N}\subset [t_0,\lambda)$,
$t_k\to t_0$ and points $x_k,\, y_k\in \S$, $x_k \neq y_k$,
such that
$f_{t_k}(x_k) = f_{t_k}(y_k) = p_k$.
From the definition of $f_{t_k}$ and
from the assumption that $t_k<\lambda$ for all $k\in\N$, the triangle inequality
implies that
$d_{N}(x_k,y_k) < 2\lambda$, for all $k\in \N$.

By item~\ref{ii1}, it follows that
$f_{t}\vert_{E_i(\lambda)}$ is injective for all
$i\in \{1,\ldots,n\}$ and all $t\in (0,\lambda)$. Therefore,
$x_k,\,y_k\subset \S(3\lambda)$ for all $k\in\N$, since
$x_k \in E_i(3\lambda)\subset E_i(\lambda)$ gives
that $y_k \in E_i(\lambda)$ and vice versa.
The compactness of $\ol{{\S}(3\lambda)}$ implies that, up to
subsequences, there are $x,y\in \ol{{\S}(3\lambda)}$ such that
$x_k\to x$ and $y_k \to y$. By the continuity
of $(t,z)\mapsto {f}_t(z)$,
it follows that ${f}_{t_0}(x) = {f}_{t_0}(y) = p$.
Moreover,
$\{{f}_t\vert_{\ol{{\S}(3\l)}}\}_{t\in[0,\l]}$
is a smooth, compact family of immersions of the compact
surface $\ol{{\S}(3\l)}$ into $N$. Hence,
there exists an $\ve>0$ such that for
any $t\in[0,\l]$ and any $z \in \ol{{\S}(3\l)}$,
${f}_t\vert_{B_{{\S}}(z,\ve)}$ is injective, which implies that
$d_{{\S}}(x_k,y_k)\geq \ve$, thus
$x\neq y$. This proves that ${f}_{t_0}$ is not
injective.

Let $U\ni x$ and $V\ni y$ be two disjoint open disks of
${\S}$ such that the
restrictions ${f}_{t_0}\vert_U$ and ${f}_{t_0}\vert_V$ are injective.
It follows that ${f}_{t_0}(U)$ and ${f}_{t_0}(V)$
are embedded disks
intersecting at some point $p = f_{t_0}(x) = f_{t_0}(y)$.
Note that the fact that $f_t$
is injective for all $t\in (0,t_0)$ gives
that the intersection of $f_t(U)$ and $f_t(V)$ is tangential at $p$.
Consider the two oriented geodesic rays
$\g_x = \{{f}_t(x)\}_{t\in[0,t_0]}$
and $\g_y = \{{f}_t(y)\}_{t\in[0,t_0]}$. Then the concatenation
$\g$ of $\g_x$ with $\g_y^{-1}$ is a smooth geodesic
arc in $N$, with length $2t_0$ and meeting ${\S}$
orthogonally at the points $x$ and $y$. Finally,
if $\varphi\colon N \to N$ is the order-two
isometry containing $\S$ in its fixed point set,
then $\wt{\g} = \g\cup{\varphi}(\g)$ is
a closed geodesic
with length $4t_0$ and
meeting ${\S}$ orthogonally, proving item~\ref{itemgeodesic}
of Lemma~\ref{proptg}.
\end{proof}

Lemma~\ref{proptg} gives properly embedded, totally umbilic
surfaces for small values of $H$. In order to
finish the proof of Theorem~\ref{thmcmc},
we apply
a technical result stating that the fundamental groups of
hyperbolic 3-manifolds of finite volume satisfy the following definition.

\begin{definition}[Locally Extendable Residually Finite
group]\label{defLERF}
{\em A group $G$ is called }LERF {\em if for every finitely generated
subgroup $K$ of $G$ and any $g \not \in K$, there exists a representation
$\sigma\colon G\to F$ from $G$ to a finite group $F$ such that
$\sigma(g)\not\in \sigma(K)$.
}\end{definition}

The above definition can be
extended as follows.
{\em A group $G$ is LERF
if and only if for every finitely generated subgroup $K$ of $G$
and any finite subset $\cF = \{g_1,\,\ldots,\,g_n\}\subset G$
such that $\cF\cap K = \varnothing$,
there exists a representation $\sigma\colon G \to F$ from $G$ to a finite
group $F$ such that $\sigma(\cF)\cap\sigma(K) = \varnothing$.} Indeed,
if $G$ is LERF and $\cF$ and $K$ are as above,
for each
$i\in\{1,\ldots,n\}$, there exists a representation
$\sigma_i\colon G\to F_i$ from $G$ to a finite group $F_i$
such that $\sigma_i(g_i)\not\in\sigma_i(K)$. Let
$\sigma = \sigma_1\times \ldots\times \sigma_n\colon
G \to F = F_1\times\ldots\times F_n$, then $\sigma$ is a representation
as claimed.
This equivalent definition will be used in the proof of
Theorem~\ref{thmcmc}.

By a series of recent works (see~\cite{afwilton1} for a complete list of
appropriate references) on group theoretical properties of fundamental
groups of hyperbolic 3-manifolds, one has the following result.

\begin{theorem}\label{lemmaLERF}
If $N$ is a hyperbolic 3-manifold of finite volume,
then $\pi_1(N)$ is LERF.
\end{theorem}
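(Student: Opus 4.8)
The plan is to invoke the deep structural results on the fundamental groups of finite-volume hyperbolic 3-manifolds, which by now are well-established in the literature. The statement that $\pi_1(N)$ is LERF for every finite-volume hyperbolic 3-manifold $N$ is a celebrated consequence of the resolution of the Virtually Compact Special Conjecture. So the ``proof'' is really a matter of tracing the implication chain through the appropriate references and explaining which combination of theorems yields LERFness in both the closed and cusped cases.

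First I would recall the two cases. If $N$ is closed, then $\pi_1(N)$ is a word-hyperbolic group (in the sense of Gromov) by the Milnor--Svarc lemma applied to the negatively curved universal cover $\mathbb{H}^3$. By Agol's theorem (building on the work of Wise and collaborators on cubulation), $\pi_1(N)$ is virtually compact special, i.e., it has a finite-index subgroup isomorphic to the fundamental group of a compact special cube complex. By Haglund--Wise, fundamental groups of compact special cube complexes embed quasiconvexly in right-angled Artin groups, which are LERF, and quasiconvex subgroups of such groups are separable. Since LERFness is inherited by finite-index overgroups (a finite-index subgroup of $G$ is LERF iff $G$ is LERF, for $G$ finitely generated), one concludes $\pi_1(N)$ is LERF. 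If $N$ has cusps (finite volume but noncompact), then $\pi_1(N)$ is hyperbolic relative to its cusp subgroups (which are virtually $\mathbb{Z}^2$), and one appeals to the relatively hyperbolic version of the cubulation program: Wise showed that such groups are virtually compact special, and Haglund--Wise's separability results extend to give that $\pi_1(N)$ is LERF.

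Concretely, the cleanest route is to cite the survey by Aschenbrenner, Friedl, and Wilton, referenced in the excerpt as~\cite{afwilton1}, which collects precisely these statements with attributions; their account states that the fundamental group of every finite-volume hyperbolic 3-manifold is LERF (subgroup separable), citing Agol, Wise, and Haglund--Wise. So the proof is: by~\cite{afwilton1}, $\pi_1(N)$ is virtually compact special; compact special groups are LERF by Haglund--Wise; LERFness passes to finite extensions; hence $\pi_1(N)$ is LERF.

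\textbf{The main obstacle} here is not mathematical difficulty within this paper — it is entirely a matter of correctly assembling and attributing a body of work (Agol's Virtual Haken / Virtual Special Theorem, Wise's Quasiconvex Hierarchy Theorem and its relatively hyperbolic extensions, and the Haglund--Wise separability machinery) that lies well outside the scope of the present manuscript. Since these results are now standard and the excerpt already points to~\cite{afwilton1} for ``a complete list of appropriate references,'' the honest and appropriate thing is to state the theorem as a known result, give the one-line derivation above, and defer to that survey. No self-contained proof is expected or appropriate.
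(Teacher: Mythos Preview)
Your proposal is correct and captures the essential content: the theorem is a known consequence of the Agol--Wise--Haglund--Wise machinery, and one defers to~\cite{afwilton1}. The paper proceeds similarly but with a different emphasis. Where you sketch the full implication chain (virtually compact special $\Rightarrow$ LERF) and invoke as a general lemma that LERFness passes to finite-index overgroups, the paper simply cites~\cite[Corollary~4.2.3]{afwilton1} for the orientable case and then explicitly addresses a point you do not flag: the authors remark that it is not clear from~\cite{afwilton1} whether the non-orientable case is covered, and so they write out the reduction to the orientable case by hand, using the profinite-topology characterization of LERF together with the orientation double cover. Your stated fact ``a finite-index subgroup of $G$ is LERF iff $G$ is LERF'' is precisely what makes that reduction go through, so your argument is complete in principle; the paper's version trades your broader expository sketch for an explicit verification that the non-orientable case is indeed handled.
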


The case where $N$ is orientable is
treated in~{\cite[Corollary~4.2.3]{afwilton1}}.
However,
from the discussion in
the book~\cite{afwilton1}, it was not clear to us if
Theorem~\ref{lemmaLERF} applies to the non-orientable case.
For this reason,
we next explain how this property follows from the orientable
case. Recall, from~\cite{afwilton1}, that
a group $G$ is LERF if and only if
any finitely generated subgroup $K$ of
$G$ is closed in the
profinite topology of $G$.
Let $N$ be a non-orientable hyperbolic 3-manifold of finite volume and let
$\wt{N}$ be its oriented 2-sheeted cover. Then $\pi_1(\wt{N})$ is
LERF. Since $\pi_1(\wt{N})$
can be viewed as a finitely generated, index-2
subgroup of $\pi_1(N)$, $\pi_1(\wt{N})$ is closed in the profinite
topology of $\pi_1(N)$.
Let $K \subset \pi_1(N)$ be a finitely generated subgroup.
There are two cases to consider: either $K\subset \pi_1(\wt{N})$,
in which case $K$ is closed in $\pi_1(N)$ since $\pi_1(\wt{N})$ is
LERF and closed in $\pi_1(N)$, or $K\not\subset \pi_1(\wt{N})$,
and there exists some $a \in K$,
$a\not \in \pi_1(\wt{N})$;
it then follows that
$K = \left(K\cap \pi_1(\wt{N})\right)\cup
a\left(K\cap \pi_1(\wt{N})\right)$ is the union of two closed sets,
thus $K$ is closed in $\pi_1({N})$. Therefore,
$\pi_1(N)$ is LERF.

With the above discussion in mind, we now continue with the proof of
Theorem~\ref{thmcmc}. Fix a surface $S$ of
finite topology, with negative Euler characteristic and $T>0$.
Our goal (precisely stated in Lemma~\ref{lemmaembedded} below)
is to produce a hyperbolic 3-manifold $N_{T}$, together
with a two-sided, proper embedding $\wt{f}\colon S \to N_T$
with totally geodesic image
$\wt{\Sigma}$ such that, for each $t\in (0,T]$,
the related parallel immersion
$\wt{f}_t\colon S \to N_T$ is injective.
Since the image surface $\wt{\S}_t = \wt{f}_t(S)$ is totally
umbilic with mean curvature $H = \tanh(t)$,
$\lim_{T\to \infty}\tanh(T) = 1$ and $T$ is arbitrary,
Lemma~\ref{lemmaembedded} proves Theorem~\ref{thmcmc}.

By Theorem~\ref{thmtg}, there exists a hyperbolic 3-manifold of
finite volume $N$, together with a two-sided proper embedding
$f\colon S\to N$, with totally geodesic image $\S$ and
an order-two isometry $\varphi\colon N\to N$ such that
$\varphi\vert_{\S} = {\rm Id}_\S$. Using this particular example,
we prove next lemma.

\begin{lemma}\label{lemmaembedded}
For each $T > 0$, there exists a finite Riemannian covering space
$\Pi_T\colon N_T\to N$ satisfying:
\begin{enumerate}
\item The embedding $f\colon S\to N$ lifts to an embedding
$\wt{f} \colon S\to N_T$.
\item The order-two isometry $\varphi\colon N\to N$
lifts to an order-two isometry $\wt{\varphi}\colon N_T\to N_T$ such that
$\wt{\varphi}\vert_{\wt{\S}} = {\rm Id}_{\wt{\S}}$, where
$\wt{\S} = \wt{f}(S)$.
\item The $t$-parallel surfaces to $\wt{\S}$ are embedded
for all $t\in(0,T]$.
\end{enumerate}
\end{lemma}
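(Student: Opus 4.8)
The plan is to build $N_T$ as a finite cover of $N$ that "unwinds" enough of the short closed geodesics of $N$ so that Lemma~\ref{proptg}, item~\ref{itemgeodesic}, forces the parallel immersions to stay embedded up to time $T$. First I would apply Lemma~\ref{proptg} to $f\colon S\to N$: if $\Lambda=\{t>0\mid f_t\text{ not injective}\}$ is empty, or if $\inf\Lambda>T$, there is nothing to do and we take $N_T=N$. Otherwise $t_0=\inf\Lambda\le T$ and there is a closed geodesic $\g$ in $N$ of length $4t_0$ meeting $\S$ orthogonally; the obstruction to embeddedness is exactly the existence of such "short" geodesics that return near $\S$. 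The key point is that there are only finitely many closed geodesics in $N$ of length at most $4T$ (finiteness of the length spectrum below a bound on a finite-volume hyperbolic $3$-manifold), and after passing to a suitable finite cover we can arrange that none of the relevant return paths closes up.

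The heart of the argument is the LERF property (Theorem~\ref{lemmaLERF}) applied to $K=f_*(\pi_1(S))\subset\pi_1(N)$, which is finitely generated since $S$ has finite topology, together with $\pi_1(\S)=K$ being a closed subgroup of a totally geodesic surface hence undistorted. Fix a component $\wt\S\subset\hn3$ of $\Pi^{-1}(\S)$ with stabilizer $K$ under the deck group $\Gamma=\pi_1(N)$. For each time $t\le T$, non-injectivity of $f_t$ corresponds to a deck transformation $\g\in\Gamma\setminus K$ such that the $t$-parallel copies of $\wt\S$ and $\g\wt\S$ intersect; the set of such $\g$ that can occur for some $t\in(0,T]$ is contained in a finite union of double cosets $K g_j K$, controlled by the finitely many orthogeodesics from $\S$ to $\S$ of length at most $2T$. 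Using the equivalent finite-subset form of LERF stated in the excerpt, with the finite set $\cF=\{g_1,\dots,g_n\}$ of double-coset representatives (none lying in $K$), we obtain a surjection $\sigma\colon\pi_1(N)\to F$ to a finite group with $\sigma(\cF)\cap\sigma(K)=\varnothing$. Let $N_T\to N$ be the finite cover corresponding to $\sigma^{-1}(\sigma(K))$. Then $\wt f\colon S\to N_T$ is an embedding lifting $f$ (item~1), because $\pi_1(S)$ injects into $\pi_1(N_T)$ and $\wt\S=\wt f(S)$ is embedded since no nontrivial coset of the new fundamental group identifies $\wt\S$ with a translate; and the offending translates $g_j\wt\S$ now project to distinct sheets, so $\wt f_t$ is injective for all $t\in(0,T]$ (item~3), by re-running the geometric analysis of Lemma~\ref{proptg} in $N_T$ and noting that any new short orthogeodesic would descend to one of the finitely many in $N$ lying in $\cF$, which we have separated off.

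For item~2, I would check that the order-two isometry $\varphi$ lifts: since $\varphi$ fixes $\S$ pointwise, $\varphi_*$ fixes $K$ and normalizes it, so $\varphi_*$ permutes the double cosets $Kg_jK$ fixing $K$; by enlarging $\cF$ to be $\varphi_*$-invariant (still finite, still disjoint from $K$) and replacing $\sigma$ by a representation that also factors the $\varphi_*$-action — concretely, pass to the common finite cover associated to $\bigcap_{\epsilon}\varphi_*^{\epsilon}\big(\sigma^{-1}\sigma(K)\big)$, a characteristic-enough finite-index subgroup normalized by $\langle\varphi_*\rangle$ — we get $\wt\varphi$ of order two on $N_T$ with $\wt\varphi|_{\wt\S}=\mathrm{Id}$. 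The main obstacle is the bookkeeping in the previous paragraph: one must verify carefully that the (finitely many) group elements realizing non-injectivity for some $t\le T$ genuinely lie in a finite, a priori identifiable subset of $\Gamma\setminus K$ — this is where the uniform distance bound "$d_N(x,y)<2\lambda$" from the proof of Lemma~\ref{proptg}, the discreteness of $\Gamma$, and the properness of $\S$ combine to give finiteness — and that the separation achieved by LERF in $\pi_1(N)$ survives the geometric translation back into the claim about parallel surfaces in $N_T$. Everything else is routine covering-space theory and Mostow rigidity (to keep $\wt\varphi$ an isometry).
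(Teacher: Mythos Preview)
Your approach is correct and shares the paper's essential strategy: use LERF (Theorem~\ref{lemmaLERF}) to pass to a finite cover in which the short geodesic arcs responsible for non-injectivity of the parallel maps have been unwound, then invoke item~\ref{itemgeodesic} of Lemma~\ref{proptg}. Your handling of item~2 --- intersecting $\sigma^{-1}(\sigma(K))$ with its $\varphi_*$-image to obtain a $\varphi_*$-invariant finite-index subgroup --- is exactly what the paper does.

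The packaging differs. You propose to separate, in one LERF step, a finite collection $\cF=\{g_1,\dots,g_n\}$ of double-coset representatives coming from all orthogeodesics of length $\le 2T$. The paper instead works iteratively with \emph{closed} geodesics: it lets $\cG_L$ be the finite set of prime closed geodesics of length $<L=5T$ meeting $\S$ orthogonally (finiteness of the length spectrum), picks one $\g_1\in\cG_L$, separates the powers $\{[\g_1],\dots,[\g_1]^k\}$ from $K$ with $k$ large enough that $k\cdot\mathrm{Length}(\g_1)\ge L$, passes to the cover, and observes that $\g_1$ no longer lifts with length $<L$ while no new short orthogonal geodesics can appear; induction on $|\cG_L|$ terminates. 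This buys the paper a cleaner finiteness statement (closed geodesics of bounded length, rather than orthogeodesic arcs) and a direct match with the output of Lemma~\ref{proptg}, item~\ref{itemgeodesic}.

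Your version is sound, but the sentence ``finitely many orthogeodesics from $\S$ to $\S$ of length at most $2T$'' is the one place you should be more careful: when $\S$ is noncompact, parabolic double cosets produce infinitely many short orthogeodesic arcs arbitrarily deep in the cusps. These do \emph{not} cause non-injectivity (by item~\ref{ii1} of Lemma~\ref{proptg}), so your bad set $\cF$ should be defined via the non-injectivity condition itself, and then finiteness follows because the offending pairs $(x,y)$ lie in the compact piece $\ol{\S(3\lambda)}$, exactly as you indicate. The paper sidesteps this by working with closed geodesics in $N$ from the outset.
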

\begin{proof}
Fix $T>0$.
Since $N$ is a hyperbolic 3-manifold of finite volume, there are only
a finite number of smooth
closed geodesics in $N$ with length less than $L=5T$.
In particular, the (possibly empty)
collection $\cG_L$ of prime, smooth closed geodesics in $N$ that are orthogonal
to $\Sigma\subset N$ at some point and with length less than $L$
is finite.

Note that if $\cG_L$ were empty, then Lemma~\ref{lemmaembedded}
follows directly from item~\ref{itemgeodesic} of Lemma~\ref{proptg},
by letting $N_T = N$, $\Pi_T = {\rm Id}$, $\wt{f}=f$ and
$\wt{\varphi}=\varphi$.
Thus, we next assume that
$$\cG_L=\{\g_1,\g_2, \ldots,\g_{n(L)}\}\neq \varnothing.$$

For each $j\in \{1,\ldots,n(L)\}$,
let $p_j\in \S\cap \g_j$
be a point where $\g_j$ meets $\S$ orthogonally.
The geodesic $\g_j\in \cG_L$ is invariant under the isometry
$\varphi$; furthermore, $\varphi\vert_{\g_j}$ reverses the orientation
of $\g_j$ and fixes the point $p_j$.

Let $i\colon \S\to N$ be the inclusion map. Choosing $p_1$ as
a base point, we let
$i_*\colon \pi_1(\Sigma,p_1) \to \pi_1(N,p_1)$ be the induced
homomorphism on fundamental groups.
We let
$$K_1 = i_*(\pi_1(\S,p_1))\subset \pi_1(N,p_1)$$
be the image of
the finitely generated group $\pi_1(\Sigma,p_1)$.
Since $p_1\in \g_1$,
$\g_1$ can be considered to represent a nontrivial
element $[\g_1]\in \pi_1(N,p_1)$.
Moreover,
since $\S$ is totally geodesic in a hyperbolic 3-manifold,
$i_*$ is injective and,
for any positive integer $l$,
$[\g_1]^l \not\in K_1$.

Fix a positive integer $k$ sufficiently large so that
\begin{equation}\label{eq:length}
k\cdot \mbox{Length}(\g_1)\geq L
\end{equation}
and let $\cF= \{[\g_1],[\g_1]^2,\ldots,[\g_1]^k\}$. Then,
$\cF\cap K_1 = \varnothing$ and, since $\pi_1(N,p_1)$ is LERF, there exists
a representation $\sigma\colon\pi_1(N,p_1) \to F_1$, from $\pi_1(N,p_1)$
to a finite group $F_1$,
such that $\sigma(\cF) \cap \sigma(K_1)= \varnothing$.
Let $\wh{K}_1,\,\wt{K}_1$
be the subgroups of $\pi_1(N,p_1) $ defined by
$$\wh{K}_1=\sigma^{-1}(\sigma(K_1)),\quad
\wt{K}_1=\wh{K}_1\cap \varphi_*(\wh{K}_1).$$
Note that $K_1\subset \wt{K}_1$, since $K_1\subset \wh{K}_1$ and
$\varphi_*$ fixes all elements of $K_1$.
Also, for $l\in\{1,2,\ldots, k\}$, $[\g_1]^l\not \in \wh{K}_1$,
hence $[\g_1]^l\not \in \wt{K}_1$.

Next we show that $\wt{K}_1$ has finite index in $\pi_1(N,p_1)$.
Since $\wh{K}_1\supset{\rm ker}(\sigma)$ and $F_1$ is finite,
the index of $\wh{K}_1$ in $\pi_1(N,p_1)$ is finite. Moreover,
$\varphi_*\colon \pi_1(N,p_1)\to \pi_1(N,p_1)$ is a group isomorphism;
hence the index of $\varphi_*(\wh{K}_1)$ in $\pi_1(N,p_1)$
is also finite.
Then, since the intersection of two subgroups of finite index also has finite index,
the claim follows.

Let $\Pi_1\colon (N_1,q_1) \to (N,p_1)$ be the Riemannian covering space
of $(N,p_1)$ with image subgroup $(\Pi_1)_*(\pi_1(N_1,q_1))=\wt{K}_1$.
Note that $\Pi_1$ is a finite covering, since the index
of $\wt{K}_1$ in $\pi_1(N,p_1)$ is finite; in particular,
$N_1$ is a hyperbolic manifold of finite volume and, if $N$ is closed
(resp. orientable),
$N_1$ is also closed (resp. orientable).

Since $\varphi_*(\wt{K}_1)= \wt{K}_1$, then, by the lifting criterion,
the maps $i\colon \S\to N$, $\varphi\colon N\to N$
have respective lifts
$$i_1\colon (\S,p_1)\to (N_1,q_1),\quad \varphi_1\colon (N_1,q_1)\to (N_1,q_1).$$
Let $\S_1$ denote the embedded, totally geodesic image surface
of the injective immersion
$i_1$, and note
that $\S_1$ is two-sided and
contained in the fixed point set of the order-two
isometry $\varphi_1$.

Consider the (possibly empty) collection
$\cG_L^1=\{\g_1^1, \g^1_2,\ldots, \g^1_{n_1(L)}\}$
of smooth, prime closed geodesics in $N_1$
that have length less than $L$ and are
orthogonal to $\S_1$ at some point.
Then, $\Pi_1$ induces an injective map
from $\cG_L^1$ to $\cG_L$,
hence $n_1(L)\leq n(L)$. We next prove that
this construction yields $n_1(L)<n(L)$.

\begin{claim}\label{claiminduction}
The image set of geodesics
$\Pi_1(\cG_L^1)=\{\Pi_1(\g_j^1) \mid j=1, \ldots, n_1(L)\}$
forms a subset of $\{\g_1,\g_2,\ldots, \g_{n(L)}\}$,
which does not include $\g_1$.
In particular, $n_1(L)<n(L)$.
\end{claim}
\begin{proof}
Arguing by contradiction,
suppose, after possibly reordering, that $\g_1=\Pi_1(\g^1_1)$ as a set.
Then, $\g_1^1$ is the lift of a certain
smallest power $J$ of $\g_1$, which implies that the length of $\g_1^1$
is equal to $J\cdot \mbox{Length}(\g_1)$. However, for $l\in\{1,\,\ldots,\,k\}$,
$[\g_1]^l\not \in \wt{K}_1$; hence,
none of the powers of $\g_1$ less than or equal to $k$ lift.
Then $J>k$,
and~\eqref{eq:length} implies that the length of
$\g^1_1$ is larger than $L$, which is a contradiction.
\end{proof}

By induction,
Claim~\ref{claiminduction} allows us
to produce a finite Riemannian
cover $\Pi_T\colon N_T\to N$ such that:
\begin{enumerate}
\item The embedding $f\colon S\to N$ lifts to an embedding
$\wt{f}\colon S \to N_T$, with totally geodesic image surface $\wt{\S}$.
\item There are no smooth closed geodesics in $N_T$
with length less than $L$ and intersecting $\wt{\S}$
orthogonally at some point.
\item $\varphi\colon N\to N$ lifts to an order-two isometry
$\wt{\varphi}\colon N_T\to N_T$ and
$\wt{\varphi}\vert_{\wt{\S}} = {\rm Id}_{\wt{\S}}$.
\end{enumerate}

Note that
$\wt{f}$ and $\wt{\varphi}$ satisfy the hypothesis of
Lemma~\ref{proptg}. In particular, since
$\wt{\S}$ was
constructed in such a way that there are no
smooth closed geodesics in $N_T$ with length less than
$L = 5T$ intersecting $\wt{\S}$ orthogonally, item~\ref{itemgeodesic}
of Lemma~\ref{proptg} implies that
the $t$-parallel surfaces to $\wt{\S}$ are embedded for
all $t\in (0,T]$,
which completes the proof of
Lemma~\ref{lemmaembedded}.
\end{proof}

As already explained, Theorem~\ref{thmcmc} now follows from
Theorem~\ref{thmtg} and Lemma~\ref{lemmaembedded},
since in the above construction
the $t$-parallel surface $\wt{\S}_t= \wt{f}_t(S)$
is a properly embedded surface in $N_T$ satisfying
the conditions~\ref{itema},~\ref{itemb},~\ref{itemc} and~\ref{itemd}
of Theorem~\ref{thmcmc}
for $H =\tanh(t)\in(0,\tanh(T)]$.

\nocite{chr2}

\bibliographystyle{plain}
\bibliography{bill}

\center{Colin Adams at cadams@williams.edu \\
Department of Mathematics and Statistics, Williams College, Williamstown, MA 01267}
\center{William H. Meeks, III at  profmeeks@gmail.com\\
Mathematics Department, University of Massachusetts, Amherst, MA 01003}
\center{Álvaro K. Ramos at alvaro.ramos@ufrgs.br \\
Departmento de Matemática Pura e Aplicada, Universidade Federal do Rio Grande
do Sul, Brazil}

\end{document}